 \newtheorem{Theorem}{Theorem}[section]
 \newtheorem{Lemma}[Theorem]{Lemma}
 \newtheorem{Corollary}[Theorem]{Corollary}
 \newtheorem{Proposition}[Theorem]{Proposition}
 \newtheorem{Remark}[Theorem]{Remark}
 \newtheorem{Example}[Theorem]{Example}
 \newtheorem{Conjecture}{Conjecture}
 \def\qed{\ifhmode\textqed\fi
       \ifmmode\ifinner\quad\qedsymbol\else\dispqed\fi\fi}
 \def\textqed{\unskip\nobreak\penalty50
        \hskip2em\hbox{}\nobreak\hfill\qedsymbol
        \parfillskip=0pt \finalhyphendemerits=0}
 \def\dispqed{\rlap{\qquad\qedsymbol}}
\def\Ass{\textup{Ass}}
\def\reg{\textup{reg}}
\begin{document}

\title{Componentwise linear symbolic powers of edge ideals and Minh's conjecture}
\author{Antonino Ficarra, Somayeh Moradi, Tim Römer}

\address{Antonino Ficarra, Departamento de Matem\'{a}tica, Escola de Ci\^{e}ncias e Tecnologia, Centro de Investiga\c{c}\~{a}o, Matem\'{a}tica e Aplica\c{c}\~{o}es, Instituto de Investiga\c{c}\~{a}o e Forma\c{c}\~{a}o Avan\c{c}ada, Universidade de \'{E}vora, Rua Rom\~{a}o Ramalho, 59, P--7000--671 \'{E}vora, Portugal}
\email{antonino.ficarra@uevora.pt\,\,,\quad antficarra@unime.it}

\address{Somayeh Moradi, Department of Mathematics, Faculty of Science, Ilam University, P.O.Box 69315-516, Ilam, Iran}
\email{so.moradi@ilam.ac.ir}

\address{Tim Römer, Osnabrück University, Osnabrück, Germany}
\email{troemer@uos.de}

\subjclass[2020]{Primary 13C05, 13C14, 13C15; Secondary 05E40}
\keywords{Symbolic power, componentwise linear, regularity, edge ideal}

\begin{abstract}
In this paper, we study the componentwise linearity of symbolic powers of edge ideals. We propose the conjecture that all symbolic powers of the edge ideal of a cochordal graph are componentwise linear. This conjecture is verified for some families of cochordal graphs, including complements of block graphs and complements of proper interval graphs. As a corollary, Minh's conjecture is established for such families. Moreover, we show that $I(G)^{(2)}$ is componentwise linear, for any cochordal graph $G$.
\end{abstract}

\maketitle
\vspace*{-0.6cm}
\section*{Introduction}
Let $S=K[x_1,\ldots,x_n]$ be the standard graded polynomial ring over a field $K$. By a classical result of Cutkosky, Herzog and Trung \cite{CHT}, and independently Kodiyalam \cite{K}, the regularity of powers of a graded ideal $I\subset S$ is an eventually linear function. This had a great impact on the study of homological invariants of powers of graded ideals. A prominent trend in commutative algebra is to  explicitly determine this function for combinatorially defined monomial ideals. For instance, consider a finite simple graph $G$ with the vertex set $V(G)=\{x_1,\ldots,x_n\}$ and edge set $E(G)$. The \textit{edge ideal} of $G$ is the squarefree monomial ideal of the polynomial ring $S$ defined as $I(G)=(x_ix_j:\, \{x_i,x_j\}\in E(G))$. Then, there exist integers $k_0>0$ and $c\ge0$ such that $\reg\,I(G)^k=2k+c$ for all $k\ge k_0$. Determining the integers $k_0$ and $c$ in terms of combinatorics of $G$ is a problem of great interest. In recent years, the study of symbolic powers of monomial ideals, and in particular, edge ideals, has also gained significant attention, see for instance \cite{BC,DHNT,GHOS,JK,KKS,MNPTV,MTr,MVu,F1,F3,F2} and the references therein. While the regularity of symbolic powers of monomial ideals is known to be a quasi-linear function~\cite[Corollary 3.3]{HHT}, its precise behavior remains mysterious. In this context, N.C. Minh raised the following 
\begin{Conjecture}\label{ConjA}
    Let $I(G)$ be the edge ideal of a simple graph $G$. Then
    $$\reg\, I(G)^{(k)}=\reg\, I(G)^k,$$
    for all $k\ge1$.
\end{Conjecture}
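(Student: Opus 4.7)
The plan is to attack the conjecture through the symbolic defect module $Q_k := I(G)^{(k)}/I(G)^k$, which encapsulates the failure of the ordinary and symbolic powers to coincide. From the short exact sequence
\begin{equation*}
0 \longrightarrow I(G)^k \longrightarrow I(G)^{(k)} \longrightarrow Q_k \longrightarrow 0,
\end{equation*}
the standard regularity comparisons give $\reg I(G)^{(k)} \leq \max\{\reg I(G)^k,\, \reg Q_k\}$ and $\reg I(G)^k \leq \max\{\reg I(G)^{(k)},\, \reg Q_k + 1\}$. Combined with the Beyarslan-H\`{a}-Trung lower bound $\reg I(G)^k \geq 2k + \nu(G) - 1$ (where $\nu(G)$ is the induced matching number) and its symbolic analogue $\reg I(G)^{(k)} \geq 2k + \nu(G) - 1$, obtained by the same induced-subgraph localization (using $I(H)^{(k)} = I(G)^{(k)} \cap K[V(H)]$ for $H$ induced, together with $I(H)^{(k)} = I(H)^k$ when $H$ is a disjoint union of edges), the conjecture reduces to the single bound
\begin{equation*}
\reg Q_k \leq 2k + \nu(G) - 2.
\end{equation*}
Indeed, this bound forces both $\reg Q_k < \reg I(G)^k$ and $\reg Q_k + 1 \leq \reg I(G)^{(k)}$ simultaneously, collapsing both comparison inequalities to equality.

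To establish the key estimate on $\reg Q_k$, I would use the Herzog-Hibi-Trung primary description $I(G)^{(k)} = \bigcap_{C} P_C^k$ over the minimal vertex covers $C$ of $G$, with $P_C = (x_j : x_j \in C)$, together with the vanishing $Q_k = 0$ for bipartite $G$ (Simis-Vasconcelos-Villarreal). Thus $Q_k$ is governed entirely by the odd-cycle combinatorics of $G$. Via polarization, $Q_k$ admits a squarefree presentation, and Takayama's formula expresses its multigraded local cohomology in terms of reduced homology of certain simplicial complexes built from weighted vertex covers summing to $k$. I would aim to show these complexes are sufficiently connected by induction on $k$ and on the odd-cycle transversal number of $G$, using vertex-deletion short exact sequences together with the componentwise linearity techniques developed in this paper for cochordal graphs.

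In parallel, I would verify the target bound on key test families—odd cycles, odd wheels, and complements of chordal graphs—where $I(G)^{(k)}$ admits explicit generators and the defect module is small-dimensional, and then propagate the estimate to arbitrary $G$ by induction on a suitable odd-cycle invariant. The cochordal subclasses handled in the present paper would serve as crucial base cases, since once componentwise linearity of $I(G)^{(k)}$ is known the top degree of a minimal generator controls $\reg I(G)^{(k)}$ directly, and this matches $\reg I(G)^k$ whenever the latter also has a combinatorial expression.

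The principal obstacle is that the target bound $\reg Q_k \leq 2k + \nu(G) - 2$ may itself be too optimistic for dense non-bipartite graphs: the induced-matching lower bound for $\reg I(G)^k$ is not tight in general, so a uniform estimate purely in terms of $\nu(G)$ is unlikely to close the gap for arbitrary $G$. A complete proof therefore appears to require a finer combinatorial invariant—perhaps a cochordal cover number or an odd-cycle-sensitive refinement—that jointly governs $\reg I(G)^k$, $\reg I(G)^{(k)}$, and $\reg Q_k$. The graph-class-by-graph-class strategy pursued in this paper, extending the componentwise linearity of symbolic powers successively across cochordal subfamilies, provides a concrete incremental path: each enlargement of the class where $\reg I(G)^{(k)} = \reg I(G)^k$ is established sharpens the available defect estimate and brings the general conjecture one step closer.
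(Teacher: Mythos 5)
This statement is Conjecture~\ref{ConjA} (Minh's conjecture), which the paper does not prove: it remains open, and the paper only establishes it for special families (complements of block graphs, of proper interval graphs, the class in Theorem~\ref{Thm:forestComLinear}(c), joins of these, and the case $k=2$ for all cochordal graphs), always by the indirect route of proving componentwise linearity of $I(G)^{(k)}$ and then reading off $\reg I(G)^{(k)}$ as the top generating degree $2k$ via Theorem~\ref{Thm:perfect}. So there is no proof in the paper to compare yours against, and your text should be judged as a proposed proof of an open conjecture. As such, it is not a proof but a research programme, and it has an essential gap at its core: everything is reduced to the bound $\reg Q_k \le 2k+\nu(G)-2$ for the defect module $Q_k=I(G)^{(k)}/I(G)^k$, and this bound is nowhere established. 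The formal reduction itself is sound --- given the two short-exact-sequence inequalities and the lower bounds $\reg I(G)^k\ge 2k+\nu(G)-1$ and $\reg I(G)^{(k)}\ge 2k+\nu(G)-1$, the stated bound on $\reg Q_k$ does force equality --- but the bound is a strictly stronger and much less plausible statement than the conjecture itself. Even assuming Minh's conjecture, the short exact sequence only yields $\reg Q_k\le\max\{\reg I(G)^{(k)},\,\reg I(G)^k-1\}=\reg I(G)^k$, which for dense non-bipartite graphs exceeds $2k+\nu(G)-1$ by an arbitrarily large amount; there is no reason the regularity of the defect module should be controlled by the induced matching number alone, and you concede as much in your final paragraph. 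A reduction to an unproven, probably false intermediate claim is not progress toward a proof.

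Two secondary points. First, the proposed machinery for bounding $\reg Q_k$ is also underspecified: Takayama's formula computes local cohomology of quotients $S/I$ for a monomial ideal $I$, not of subquotients such as $I^{(k)}/I^k$, and ``polarization'' of such a module is not a well-defined operation preserving regularity without further argument; one would instead have to run the multigraded long exact sequence in local cohomology for the pair $(I^{(k)},I^k)$, which returns you to comparing the two ideals directly. Second, your observation that the cochordal case of this paper would serve as a ``base case'' for an induction on an odd-cycle invariant is not supported by any actual inductive mechanism: deleting a vertex or an odd-cycle transversal does not relate $Q_k(G)$ to $Q_k$ of the smaller graph in a way that controls regularity. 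What the paper actually does --- and what a correct partial result requires --- is to bypass $Q_k$ entirely: for cochordal $G$ one shows $I(G)^{(k)}$ is componentwise linear, so that $\reg I(G)^{(k)}$ equals its highest generating degree, which Theorem~\ref{Thm:perfect} identifies as $2k=\reg I(G)^k$. If you want to contribute to the general conjecture, Conjectures~\ref{ConjB} and~\ref{ConjC} are the concrete open targets; the defect-module route as you have set it up does not close any case not already covered.
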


If this conjecture holds, then $\reg\,I(G)^{(k)}$ would be also an eventually linear function, a result that would be both surprising and impactful. Thus far, Conjecture \ref{ConjA} has been proved when $k=2,3$ for any graph~\cite{MNPTV}, and for few families of graphs, including bipartite graphs~\cite{SVV}, chordal graphs~\cite{F2}, unicyclic graphs~\cite{F1} and Cameron-Walker graphs~\cite{F3}. Note that $\reg\,I(G)^{k}\ge 2k$ for all $k$. Therefore, if Conjecture \ref{ConjA} is true, then $\reg\,I(G)^{(k)}\ge2k$ for all $k$. This naturally leads to the question: for which graphs $G$ does the equality $\reg\,I(G)^{(k)}=2k$  hold for all $k$? In particular, in this case $I(G)$ itself must have linear resolution. By Fröberg's seminal work \cite{Froberg88}, this is equivalent to  $G$ being a cochordal graph, meaning that the complementary graph $G^c$ of $G$ is chordal. Moreover, an intriguing result of Herzog, Hibi and Zheng \cite{HHZ} establishes that if $I(G)$ has linear resolution, then $I(G)^k$ has linear resolution for all $k\ge1$. This is further equivalent to $I(G)^k$ having linear quotients for all $k\ge1$.

Now, let $I(G)$ be an edge ideal with linear resolution. In general, the symbolic powers $I(G)^{(k)}$ are not equigenerated, so one cannot expect that they have linear resolution, like in the case of the ordinary powers. However, based on several computational evidence, we expect that each graded component of $I(G)^{(k)}$ has linear resolution, i.e., $I(G)^{(k)}$ is \textit{componentwise linear}. Componentwise linear ideals were introduced in \cite{HH99} by Herzog and Hibi, as those homogeneous ideals $I\subset S$ whose all graded components $I_{\langle j\rangle}$ have linear resolution. Recall that the $j^{\textup{th}}$ graded component of $I$ is the ideal generated by all homogeneous elements of degree $j$ belonging to $I$. Componentwise linear ideals are characterized by the remarkable property that their graded Betti numbers are equal to those of their generic initial ideals~\cite{AHH}.

These aforementioned considerations on edge ideals of cochordal graphs led us to formulate the following
\begin{Conjecture}\label{ConjB}
    Let $I(G)$ be the edge ideal of a simple graph $G$. Assume that $I(G)$ has linear resolution. Then $I(G)^{(k)}$ is componentwise linear for all $k\ge1$.
\end{Conjecture}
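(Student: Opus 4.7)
The plan is to proceed by induction on $k$, combining the explicit combinatorial description of $I(G)^{(k)}$ with the structural features of cochordal graphs. For $k=1$ the assertion is Fröberg's theorem, since $I(G)^{(1)}=I(G)$ is generated in a single degree and has linear resolution. For $k\ge 2$, I would work from the primary decomposition $I(G)^{(k)}=\bigcap_{C}P_{C}^{\,k}$, where $C$ ranges over the minimal vertex covers of $G$ and $P_{C}=(x_{i}:x_{i}\in C)$. A monomial $u$ then lies in $I(G)^{(k)}$ iff $\sum_{x_{i}\in C}\deg_{x_{i}}(u)\ge k$ for every such $C$. My target is to prove that each graded component $I(G)^{(k)}_{\langle j\rangle}$ has linear quotients; since each such component is generated in a single degree, this is sufficient for componentwise linearity by \cite{HH99}.

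The main technical tool I would invoke is the perfect elimination ordering of $G^{c}$. Label the vertices $x_{1},\dots,x_{n}$ so that reversing this order is a perfect elimination of the chordal graph $G^{c}$, and take the induced lex order on $S$. For each $j\ge k$, list the minimal monomial generators of $I(G)^{(k)}_{\langle j\rangle}$ in decreasing order, and attempt to verify that the colon ideals $(u_{1},\dots,u_{s-1}):u_{s}$ are generated by variables. The heart of the argument is a swap procedure: given generators $u>v$, produce a third generator $w>v$ with $w/\gcd(w,v)$ a single variable dividing $u/\gcd(u,v)$, exploiting that a simplicial vertex of $G^{c}$ has an especially controlled interaction with the collection of minimal vertex covers of $G$. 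Running an induction on $|V(G)|$ that peels off such a simplicial vertex should then allow one to propagate the linear quotients property from smaller cochordal graphs to $G$, paralleling the strategy that succeeds for ordinary powers $I(G)^{k}$ in the Herzog--Hibi--Zheng theorem \cite{HHZ}.

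A complementary route is via polarization and Alexander duality: $I(G)^{(k)}$ polarizes to a squarefree monomial ideal $\widetilde{I(G)^{(k)}}$, and componentwise linearity is preserved by polarization and is Alexander-dual to sequential Cohen--Macaulayness of the associated simplicial complex. One would then attempt to construct a shelling of this complex that respects the clique decomposition of $G^{c}$. The main obstacle I expect, in either approach, is the simultaneous control of all the $P_{C}$-adic inequalities once $k\ge 3$. For $k=2$ a problematic generator can be repaired by trading a single squared variable, but for higher $k$ one is forced into a multi-step trading argument that must track several minimal vertex covers at once, and the induction on $|V(G)|$ may fail to restrict cleanly because the intersection $\bigcap_{C}P_{C}^{\,k}$ does not in general behave well under vertex deletion. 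This is presumably why the statement is posed here as a conjecture, and why the paper settles it only for $k=2$ in full generality, together with all $k$ for the subfamilies of complements of block graphs and of proper interval graphs.
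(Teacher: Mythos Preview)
The statement you are addressing is Conjecture~\ref{ConjB}, which is an \emph{open conjecture} in the paper; there is no proof of it in the paper to compare against. Your proposal is not a proof either, and you say as much in your final paragraph. What you have written is an outline of a strategy together with a correct diagnosis of where it breaks down.

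That strategy---order the generators of each $I(G)^{(k)}_{\langle j\rangle}$ by the lex order coming from a perfect elimination ordering of $G^c$, and verify linear quotients by a variable-swap argument---is exactly what the paper does in the proof of Theorem~\ref{Thm:forestComLinear} for the three special families, and (in a more elaborate form, with the splitting $I(G)^{(2)}=K_3(G)+I(G)^2$ and Lemmas~\ref{Lem:LinQuotTech}--\ref{Lem:PJ}) in the proof of Theorem~\ref{Thm:I2} for $k=2$. The obstacle you name is also the right one: after swapping $x_t$ for $x_i$ in $v$, one must check $\deg(w_A)\le d-k_A$ for \emph{every} maximal clique $A$ of $G^c$ containing $x_i$, and the paper's argument only closes this step under extra structural hypotheses on how such cliques overlap (pairwise intersection of size at most one for block graphs, the interval condition, or each vertex in at most two maximal cliques). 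For a general cochordal graph no such control is available, and neither you nor the paper supplies a substitute.

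Your alternative route through polarization, Alexander duality, and shellability is not pursued in the paper and would be a genuinely different attack, but you give no construction of the required shelling, so it remains a suggestion rather than an argument. In short: your sketch aligns with the paper's partial results and correctly identifies why the general case is open; it does not constitute a proof of the conjecture, and none is expected, since the paper itself leaves Conjecture~\ref{ConjB} unresolved.
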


It follows from Theorem \ref{Thm:perfect} in Section \ref{sec1} that for any cochordal graph $G$, the highest generating degree of $I(G)^{(k)}$ is $2k$. Since the highest generating degree of a componentwise linear ideal $I$ is equal to $\reg\,I$ \cite[Corollary 8.2.14]{HHBook}, if Conjecture \ref{ConjB} is true, then Conjecture \ref{ConjA} is true for any cochordal graph $G$.
Our main goal in this paper is to address Conjectures \ref{ConjA} and \ref{ConjB}.

In Section \ref{sec1}, using a description of the symbolic Rees algebra of the edge ideal of a perfect graph $G$ due to Villarreal \cite{Vil}, we determine in Theorem \ref{Thm:perfect} the generating degrees of $I(G)^{(m)}$ for any perfect graph $G$ and any positive integer $m$. In particular, it turns out that the highest degree of a minimal generator of $I(G)^{(m)}$ is $2m$, and if $G$ is cochordal then $\reg\,I(G)^{(m)}\ge\reg\,I(G)^{m}$ (see Corollary \ref{inequalityreg}). Furthermore, we obtain an explicit formula for the Waldschmidt constant of the edge ideal of a perfect graph, recovering a result of Bocci {\it et al.} \cite[Theorem 6.7(i)]{BC}.

In Section \ref{sec2}, Conjecture \ref{ConjB} is proved for the graphs whose complements are one of the following graphs:
\begin{enumerate}
\item[(a)] Block graphs,
\item[(b)] Proper interval graphs,
\item[(c)] $G$ is a chordal graph with the property that any vertex in $G$ belongs to at most two maximal cliques of $G$.
\end{enumerate}

Indeed, we prove a more general result. Let $G$ be one of the graphs in (a), (b) or (c), and let $\Ass\,I(G)=\{P_1,\dots,P_m\}$ be the set of associated primes of $I(G)$. Then $I(G)^{(k)}=\bigcap_{i=1}^mP_i^k$.   Theorem \ref{Thm:forestComLinear} shows that $\bigcap_{i=1}^mP_i^{k_i}$ is componentwise linear for any positive integers $k_1,\ldots,k_m$. In the literature, such ideals are called intersection of Veronese ideals, and their componentwise linearity was first studied by Francisco and Van Tuyl \cite{FVT}. Another family of such ideals with the componentwise linear property was given in \cite[Theorem 2.4]{MM}. Theorem \ref{Thm:forestComLinear} presents several new such families. We expect that for any cochordal graph $G$, the ideal $\bigcap_{P\in\Ass\,I(G)}P^{k_P}$ is componentwise linear for any positive integers $k_P$.

A monomial ideal $I\subset S$ has \textit{componentwise linear quotients} if $I_{\langle j\rangle}$ has linear quotients for all $j$. These ideals are componentwise linear. To prove Theorem \ref{Thm:forestComLinear}, we show that $I(G)^{(k)}$ has componentwise linear quotients. A conjecture by Soleyman Jahan and Zheng \cite{SJZ} states that if $I$ has componentwise linear quotients, then $I$ has linear quotients. This conjecture is widely open, and has been solved only in some special cases \cite{BQ1,BQ2,Fic0}. Given these facts, along with the proof of Theorem \ref{Thm:forestComLinear}, we expect that the following more general statement than Conjecture \ref{ConjB} holds true.

\begin{Conjecture}\label{ConjC}
    Let $I(G)$ be the edge ideal of a simple graph $G$. Assume that $I(G)$ has linear resolution. Then $I(G)^{(k)}$ has linear quotients for all $k\ge1$.
\end{Conjecture}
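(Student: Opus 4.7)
The plan is to first invoke Fröberg's theorem to reduce to the case where $G$ is cochordal, and then combine this with the primary decomposition
$$I(G)^{(k)}=\bigcap_{C\in\mathcal{C}(G^c)}P_{V(G)\setminus C}^{k},$$
where $\mathcal{C}(G^c)$ denotes the set of maximal cliques of $G^c$ and $P_{W}=(x_i : x_i\in W)$ (maximal cliques of $G^c$ being in bijection with minimal vertex covers of $G$). Motivated by the strategy behind Theorem \ref{Thm:forestComLinear}, I would actually aim for the stronger statement that $\bigcap_{C}P_{V(G)\setminus C}^{k_C}$ has linear quotients for every choice of positive integers $\{k_C\}$, since individual decrements $k_C\mapsto k_C-1$ open up cleaner inductive options on the exponent vector $(k_C)_C$.

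The central step is the construction of an explicit total order on the minimal generators of $I(G)^{(k)}$ yielding linear quotients. Fix a perfect elimination ordering $x_1,\dots,x_n$ of $G^c$, which exists since $G^c$ is chordal, and order the minimal generators first by total degree and then (reverse) lexicographically with respect to $x_1>\cdots>x_n$. To verify linear quotients one must, for each pair $u<v$ of generators and each variable $x$ dividing $u/\gcd(u,v)$, produce an earlier $w<v$ of the form $w=x\cdot d$ with $d\mid v$ and $w\in I(G)^{(k)}$. The simpliciality in $G^c$ of some well-chosen vertex appearing in the colon should allow one to swap a factor of $v$ for a variable in its $G^c$-closed neighborhood, yielding such a $w$ that still lies in every $P_{V(G)\setminus C}^{k_C}$.

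The principal obstacle is the mixed-degree nature of the generators: when $\deg u<\deg v$, the \emph{obvious} candidate $u$ is too small, so a fresh witness $w$ of degree exactly $\deg v$ must always be manufactured. Producing $w$ requires understanding how the exponent vector of $v$ restricted to each $V(G)\setminus C$ can be rebalanced, which hinges on how the maximal cliques of $G^c$ interact---this is precisely where the clique tree of the chordal graph $G^c$ and its Helly property should come into play. As a complementary route, one could induct on $|V(G)|$ by deleting a simplicial vertex $x$ of $G^c$ and comparing $I(G)^{(k)}$ to the analogous ideal on $G\setminus x$ via a short exact sequence, the hard part being to splice a linear-quotients order on the smaller graph with an explicit order on the new generators supported at $x$. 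Either approach demands substantial combinatorial bookkeeping to control the colon ideals uniformly across degrees, and I expect that bookkeeping to be the main source of difficulty in turning the conjecture into a theorem.
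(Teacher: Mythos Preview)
This statement is a \emph{Conjecture} in the paper, not a theorem, and the paper does not prove it in general. What the paper establishes are two partial results. Theorem~\ref{Thm:forestComLinear} shows that for three restricted families of cochordal graphs (complements of block graphs, complements of proper interval graphs, and cochordal graphs in which every vertex lies in at most two maximal independent sets) the ideal $\bigcap_{A}P_{[n]\setminus A}^{k_A}$ has \emph{componentwise} linear quotients, i.e.\ each graded piece $J_{\langle d\rangle}$ has linear quotients with respect to the lex order induced by a perfect elimination ordering of $G^c$. Theorem~\ref{Thm:I2} proves that $I(G)^{(2)}$ has linear quotients for every cochordal $G$, via an inductive splitting $I(G)^{(2)}=x_1L+I(G_1)^{(2)}$ built from the decomposition $I(G)^{(2)}=K_3(G)+I(G)^2$ and the auxiliary Lemmas~\ref{Lem:LinQuotTech}--\ref{Lem:PJ}.

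Your proposal is therefore not a proof to be checked against the paper's, but a strategy for an open problem, and you are candid about this. The degree-then-lex order you suggest is exactly the order the paper uses \emph{within a fixed degree} in Theorem~\ref{Thm:forestComLinear}, and the ``complementary route'' of deleting a simplicial vertex of $G^c$ is exactly the engine of Theorem~\ref{Thm:I2}. The obstacle you flag---producing a witness $w$ when $\deg u<\deg v$---is real and is precisely why the paper restricts to the families (a)--(c): there the intersection pattern of maximal cliques of $G^c$ is constrained enough that a single variable swap suffices, whereas for a general chordal $G^c$ no such argument is known. Note also that even a successful degree-by-degree argument would only yield componentwise linear quotients; passing from that to linear quotients is itself the open Soleyman Jahan--Zheng conjecture, which the paper cites but does not resolve. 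In short, your plan is well aligned with the paper's methods, but the difficulties you anticipate are exactly the ones that leave Conjecture~\ref{ConjC} open.
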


In Theorem \ref{Thm:I2}, we show that Conjecture \ref{ConjC} holds for $k=2$. The proof is based on Theorem \ref{Thm:perfect}(a) and some splittings of the $t$-clique ideals of $G$.

\section{The generating degrees of symbolic powers of edge ideals of perfect graphs}\label{sec1}

In this section, the generating degrees of $I(G)^{(m)}$ are studied, when $G$ is a perfect graph. As a result, we derive an inequality in Conjecture~\ref{ConjA} for cochordal graphs. We begin the discussion with  some definitions and notation.
Throughout, $G$ is a finite simple graph. The vertex set and the edge set of $G$ are denoted by $V(G)$ and $E(G)$, respectively.

A graph $G$ is called {\em chordal} if it has no induced cycles of length $r>3$, and $G$ is called {\em cochordal}, if the complementary graph $G^c$ of $G$ is chordal. Here, $G^c$ is the graph with the same vertex set as $G$ whose edges are the non-edges of $G$.
A graph $G$ is called a {\em perfect} graph, if $G$ and $G^c$ do not contain induced odd cycles of length $r>3$. The family of perfect graphs contains for instance the families of bipartite graphs, weakly chordal graphs (and in particular chordal graphs and cochordal graphs) and comparability graphs of  posets. 

For a subset $A\subseteq V(G)$,  the induced subgraph of $G$ on $A$ is denoted by $G[A]$. A {\em clique} of $G$ is a subset $C\subseteq V(G)$ such that the induced subgraph $G[C]$ is a complete graph. A clique of size $r$ is called an {\em $r$-clique}. The maximum cardinality of the cliques of $G$ is denoted by $\omega(G)$ and is called the {\em clique number} of $G$. 

Villarreal in~\cite[Corollary 3.3]{Vil} gave a description for the symbolic Rees algebra $$\mathcal{R}_s(I(G))=S\oplus I(G)^{(1)}t\oplus \cdots \oplus I(G)^{(i)}t^i\oplus\cdots\subseteq S[t]$$ of $I(G)$, when $G$ is a perfect graph in terms of the cliques of $G$, as follows:
\begin{equation}\label{terriblesossage}
\mathcal{R}_s(I(G))=K[\textbf{x}_Ft^r: F \textrm{ is an $(r+1)$-clique of $G$}],
\end{equation}
where $\textbf{x}_F=\prod_{x_i\in F}x_i$. 

For a positive integer $r$, the \textit{$r$-clique ideal} of $G$ was defined in~\cite{Mor} as
$$
K_r(G)=(\textbf{x}_F: F \textrm{ is an $r$-clique of $G$}).
$$

When $\omega(G)=1$, we have $I(G)=(0)$. So excluding this case, in the following theorem we assume that $\omega(G)\geq 2$.  

\begin{Theorem}\label{Thm:perfect}
    Let $G$ be a perfect graph with  the clique number $\omega=\omega(G)\geq 2$. Then, for all $m\ge1$,
\begin{enumerate}
\item[(a)] $
    I(G)^{(m)}=\sum K_{s_1}(G)K_{s_2}(G)\cdots K_{s_j}(G)$, where the sum is taken over all integers $1\le j\le m$ and all integers $s_1,\dots,s_j$ such that $2\le s_i\le \omega$ for all $i$, and $s_1+\dots+s_j=m+j$.
\item[(b)] $\beta_{0,d}(I(G)^{(m)})\neq 0$ if and only if $d=m+j$ with $\lceil m/(\omega-1)\rceil\le j\le m$.
\end{enumerate}
\end{Theorem}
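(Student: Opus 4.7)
The plan is to deduce Part~(a) directly from the $t$-graded structure of Villarreal's description~\eqref{terriblesossage} of $\mathcal{R}_s(I(G))$, and to deduce Part~(b) by reading off the possible degrees from~(a) and then exhibiting, for each admissible $j$, an explicit minimal generator of $I(G)^{(m)}$ of degree $m+j$ supported on a maximum clique of $G$.

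For~(a), a typical monomial in the degree-$m$ piece (with respect to $t$) of $\mathcal{R}_s(I(G))$ is a product
$$
\textbf{x}_{F_1}t^{r_1}\cdots \textbf{x}_{F_j}t^{r_j},\qquad r_1+\cdots+r_j=m,
$$
with each $F_i$ an $(r_i+1)$-clique of $G$. Factors with $r_i=0$ contribute only a variable of $S$ and can be absorbed into the coefficient, so one may assume $r_i\ge 1$. Writing $s_i=r_i+1$, each $F_i$ is an $s_i$-clique with $2\le s_i\le\omega$, and the constraint becomes $s_1+\cdots+s_j=m+j$; summing over all admissible tuples yields exactly the formula in~(a).

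The ``only if'' direction of~(b) is then immediate: any generator coming from a summand $K_{s_1}(G)\cdots K_{s_j}(G)$ sits in degree $s_1+\cdots+s_j=m+j$, and the inequalities $2\le s_i\le\omega$ force $j\le m$ and $j\ge\lceil m/(\omega-1)\rceil$. For the converse, I fix a maximum clique $F=\{y_1,\ldots,y_\omega\}$ and work inside $K[y_1,\ldots,y_\omega]$. The key tool will be the criterion that a monomial $w=y_1^{b_1}\cdots y_\omega^{b_\omega}$ lies in $I(G)^{(m)}$ if and only if
$$
b_1+\cdots+b_\omega-\max_{k} b_k\ge m.
$$
Two inputs justify this: (i) since $G[F]$ is a complete graph, every minimal vertex cover of $G$ meets $F$ in at least $\omega-1$ vertices; (ii) for any vertex $y\in V(G)$, greedily shrinking $V(G)\setminus\{y\}$ produces a minimal vertex cover of $G$ avoiding $y$. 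Combining (i) and (ii), the infimum over $P\in\Min I(G)$ of $\sum_{y_k\in P}b_k$ is exactly $\sum_k b_k-\max_k b_k$, giving the criterion.

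Given this, for each admissible $j$ I will set $a_1=a_2=j$ and distribute the remaining $m-j$ among $a_3,\ldots,a_\omega$ subject to $a_k\le j$; the required bound $m-j\le(\omega-2)j$ is precisely $j\ge m/(\omega-1)$, so such a distribution exists. The resulting $v=y_1^{a_1}\cdots y_\omega^{a_\omega}$ has degree $m+j$ and $\sum_k a_k-\max_k a_k=m$, so $v\in I(G)^{(m)}$ by the criterion. Since the maximum $j$ is attained at two indices, dividing $v$ by any single $y_k$ keeps the maximum equal to $j$ while dropping the sum by $1$, so $\sum a_k-\max_k a_k$ becomes $m-1$ and no proper divisor of $v$ lies in $I(G)^{(m)}$; hence $v$ is a minimal generator of degree $m+j$. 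The most delicate step will be this minimality check: it is the ``maximum attained at two indices'' hypothesis that prevents the new maximum from also dropping by one when one divides by a unique argmax, and isolating this point cleanly is the main obstacle in the argument.
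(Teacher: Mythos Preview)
Your argument for Part~(a) and the ``only if'' direction of~(b) matches the paper's exactly. For the ``if'' direction of~(b), both you and the paper construct an explicit monomial supported on a maximum clique with the top two exponents equal to~$j$, but the minimality proofs diverge. The paper chooses the specific ``staircase'' monomial $f=\prod_\ell\prod_{i=1}^{s_\ell}x_i$ for an admissible tuple $(s_1,\dots,s_j)$, assumes a hypothetical smaller generator $f'\in K_{s'_1}\cdots K_{s'_{j'}}$ with $j'<j$ divides $f$, and derives a contradiction through a somewhat intricate chain of inequalities comparing the exponent vectors. Your approach bypasses the decomposition~(a) entirely at this step: you observe that for a monomial $w=y_1^{b_1}\cdots y_\omega^{b_\omega}$ supported on a maximum clique, membership in $I(G)^{(m)}$ is equivalent to $\sum_k b_k-\max_k b_k\ge m$ (this follows cleanly from the facts that every minimal vertex cover misses at most one vertex of a maximum clique, and that every vertex is avoided by some minimal vertex cover), and then the minimality of $v$ is immediate because the maximum exponent~$j$ is attained twice. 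Your route is shorter and more transparent for the minimality check; the paper's route has the virtue of staying entirely within the clique-ideal language of Part~(a), which is perhaps more thematically consistent with the rest of the section but at the cost of a longer computation.
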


\begin{proof}
(a)  It follows from equation (\ref{terriblesossage}) that $$I(G)^{(m)}t^m=\sum K_{s_1}(G)K_{s_2}(G)\cdots K_{s_j}(G)t^{(s_1-1)+\cdots +(s_j-1)},$$
where the   sum is taken over  all integers $s_1,\dots,s_j\geq 2$ for some $j$ such that $(s_1-1)+\cdots +(s_j-1)=m$.  This is equivalent to 
$s_1+\dots+s_j=m+j$ and  $2\le s_i\le \omega$ for all $i$, since $K_{s_i}(G)=(0)$ for $s_i>\omega$. Moreover, from the inequalities $s_i\geq 2$, we obtain $2j\leq \sum_{i=1}^j s_i=m+j$ and hence
$j\le m$.

\medskip
(b) First we prove the `if' statement.  Let $q=\lceil m/(\omega-1)\rceil$, and $q\le j\le m$ be an integer. We need to show that $I(G)^{(m)}$ has a minimal generator of degree $m+j$.

First we claim that there exist integers $s_1,\dots,s_j$ such that $2\le s_i\le \omega$ for all $i$, and $s_1+\dots+s_j=m+j$. We prove this by induction on $j$. The first step of induction is $j=q$. If $\omega-1$ divides $m$, then $m=(\omega-1)q$. Thus $m+q=\omega q$ and hence $m+q=s_1+\dots+s_q$, where $s_1=\cdots=s_q=w$. Now, assume that $\omega-1$ does not divide $m$. Then $m=(\omega-1)(q-1)+r$, where $0<r<\omega-1$, and so $m+q=(q-1)\omega+r+1=s_1+\dots+s_q$, where $s_i=\omega$ for $1\le i\le q-1$ and $s_q=r+1$ with $2\leq s_q<\omega$. So the claim is proved for $j=q$.

Now, let $j$ be an integer with $q<j\le m$, and assume inductively that there exist integers $s_1,\dots,s_{j-1}$, with $2\le s_i\le \omega$, such that $\sum_{i=1}^{j-1}s_i=m+(j-1)$. Since $j\le m$, there exist $1\le i\le j-1$ such that $s_i>2$. Otherwise, $\sum_{i=1}^{j-1}s_i=2(j-1)=m+(j-1)$. This implies that $j\leq m=j-1$, which is a contradiction. Therefore, we may assume that $s_{j-1}>2$. Then
$$
m+j=\sum_{i=1}^{j-1}s_i+1=\sum_{i=1}^{j-2}s_i+(s_{j-1}-1)+2=\sum_{i=1}^j s'_i,$$
where $s'_i=s_i$ for $1\le i\le j-2$, $s'_{j-1}=s_{j-1}-1$ and $s'_j=2$. We have $2\le s'_i\le\omega$ for all $1\le i\le j$. So the claim is proved. 

Next, let $q\leq j\leq m$ be an integer. We present a monomial $f$ of degree $m+j$ and show that it is a minimal generator of $I(G)^{(m)}$. Let $2\leq s_1\leq s_2\leq\cdots\le s_j\leq \omega$ be integers such that $m+j=\sum_{i=1}^j s_i$. Let $V(G)=\{x_1,\ldots,x_n\}$.
Consider a minimal monomial generator $u\in K_{s_j}(G)$. Without loss of generality, we may assume that $u=\prod_{i=1}^{s_j}x_i$. Then $\{x_1,\ldots,x_{s_j}\}$ forms a clique in $G$. Let $u_{\ell}=\prod_{i=1}^{s_{\ell}}x_i$ for $1\leq \ell\leq j$. Then by (a),
$$
f=u_1u_2\cdots u_j\in K_{s_1}(G)K_{s_2}(G)\cdots K_{s_j}(G)\subseteq I(G)^{(m)}.
$$
We show that $f$ is a minimal generator of 
$I(G)^{(m)}$. Suppose that this is not the case. Then there exists a minimal monomial generator $f'$ of $I(G)^{(m)}$ such that $f'$ divides $f$ and $\deg(f')=m+j'$ for some integer $j'<j$. We have $f'\in K_{s'_1}(G)K_{s'_2}(G)\cdots K_{s'_{j'}}(G)$ for integers $2\leq s'_1,\ldots,s'_{j'}\leq \omega$ with $m+j'=\sum_{i=1}^{j'} s'_i$.

Let $f=x_1^{a_1}\cdots x_n^{a_n}$ and $f'=x_1^{b_1}\cdots x_n^{b_n}$. We have $b_i\le a_i$ for all $i$. Moreover, $b_i\le j'$ for all $i$ since $f'$ is the product of $j'$ squarefree monomials. Furthermore,
\begin{equation}\label{eq:f}
    f=(\prod_{i=1}^{s_1}x_i^j)(\prod_{i=s_1+1}^{s_2}x_i^{j-1})\cdots(\prod_{i=s_{j-2}+1}^{s_{j-1}}x_i^2)(\prod_{i=s_{j-1}+1}^{s_j}x_i),
\end{equation}
with the convention that if $s_h=s_{h+1}$ for some $h$, then $\prod_{i=s_h+1}^{s_{h+1}}x_i^{j-h}=1$. 
From equation (\ref{eq:f}) we see that $a_i=0$ for $i>s_j$ and $a_i=p$, for any $s_{j-p}+1\leq i\leq s_{j-p+1}$, where $1\leq p\leq j$ and $s_0=0$. We can write $f'=gh$ where $g=\prod_{i=1}^{s_{j-j'+1}}x_i^{b_i}$ and $h=\prod_{i=s_{j-j'+1}+1}^{n}x_i^{b_i}$. Then
\begin{equation}\label{eq:j'1}
\sum_{i=1}^{j'} s'_i\ =\ \deg(f')=\deg(g)+\deg(h)\ \le\ j's_{j-j'+1}+\sum_{i=s_{j-j'+1}+1}^{n}b_i.
\end{equation}
Moreover, since  $a_i=0$ for $i>s_j$ and $a_i=p$, for any $s_{j-p}+1\leq i\leq s_{j-p+1}$, where $1\leq p\leq j$, we have
\begin{equation}\label{eq:j'2}
\begin{aligned}
\sum_{i=s_{j-j'+1}+1}^{n}b_i\ &\le\ \sum_{i=s_{j-j'+1}+1}^{n}a_i\\
&\le\ (j'-1)(s_{j-j'+2}-s_{j-j'+1})+\cdots+3(s_{j-2}-s_{j-3})\\
&\phantom{aaaa}+2(s_{j-1}-s_{j-2})+(s_j-s_{j-1})\\
&=\ \sum_{\ell=1}^{j'-1}\ell(s_{j-\ell+1}-s_{j-\ell}). 
\end{aligned}
\end{equation}
We have 
\begin{equation}\label{eq:j'3}
\sum_{\ell=1}^{j'-1}\ell(s_{j-\ell+1}-s_{j-\ell})=\sum_{\ell=0}^{j'-2}(\ell+1)s_{j-\ell}-\sum_{\ell=1}^{j'-1}\ell s_{j-\ell}=\sum_{\ell=0}^{j'-2}s_{j-\ell}-(j'-1)s_{j-j'+1}.
\end{equation}
From equations (\ref{eq:j'1}), (\ref{eq:j'2}) and (\ref{eq:j'3}), we see that
$$m+j'=\sum_{i=1}^{j'} s'_i\ \leq\ j's_{j-j'+1}+
\sum_{\ell=0}^{j'-2}s_{j-\ell}-(j'-1)s_{j-j'+1}=s_j+s_{j-1}+\cdots+s_{j-j'+1}.$$ 
Since $\sum_{i=1}^{j} s_i=m+j$, we conclude that $\sum_{i=1}^{j-j'} s_i\leq m+j-(m+j')=j-j'$. Since $s_i>0$ for all $i$, this implies that $s_i=1$ for all $1\leq i\leq j-j'$, which is a contradiction. 

\medskip
`Only if': Assume that $I(G)^{(m)}$ has a minimal monomial generator of degree $d$. By (a), we have $d=m+j$ for some positive integer $j\leq m$. Moreover, there exist integers $s_1,\dots,s_j$ such that $2\le s_i\le \omega$ for all $i$, and $s_1+\dots+s_j=m+j$. Then $m+j\leq j\omega$.
Thus $m\leq j(\omega-1)$, which implies that 
$\lceil m/(\omega-1)\rceil\le j$.
\end{proof}

As a corollary of Theorem~\ref{Thm:perfect}, we obtain an inequality in Conjecture~\ref{ConjA} for the family of cochordal graphs.

\begin{Corollary}\label{inequalityreg}
Let $G$ be a perfect graph. Then $\reg\, I(G)^{(m)} \ge 2m$ for all $m\ge1$. In particular, if $G$ is cochordal, then $\reg\, I(G)^{(m)} \ge \reg\, I(G)^m$.
\end{Corollary}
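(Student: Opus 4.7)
The plan is to derive both assertions directly from Theorem~\ref{Thm:perfect}(b) together with two standard facts: that the regularity of a graded ideal is bounded below by its maximal generating degree, and that powers of an edge ideal with linear resolution again have linear resolution (Herzog--Hibi--Zheng).

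First I would specialize Theorem~\ref{Thm:perfect}(b) by taking $j = m$, which is admissible since $\lceil m/(\omega-1)\rceil \le m$ holds whenever $\omega \ge 2$. This yields $\beta_{0,2m}(I(G)^{(m)}) \neq 0$, i.e., $I(G)^{(m)}$ admits a minimal generator of degree $2m$. Since for any graded ideal $I$ one has $\reg\, I \ge \max\{d : \beta_{0,d}(I) \neq 0\}$, this immediately gives $\reg\, I(G)^{(m)} \ge 2m$, proving the first assertion.

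For the second assertion, suppose additionally that $G$ is cochordal. By Fröberg's theorem, $I(G)$ has linear resolution, and then by the result of Herzog, Hibi and Zheng~\cite{HHZ} cited in the introduction, $I(G)^m$ has linear resolution for every $m \ge 1$. Since $I(G)^m$ is generated in degree $2m$, its linear resolution forces $\reg\, I(G)^m = 2m$. Combining this with the inequality already established yields $\reg\, I(G)^{(m)} \ge 2m = \reg\, I(G)^m$, as required.

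There is essentially no obstacle here: the corollary is a direct packaging of Theorem~\ref{Thm:perfect}(b) (which supplies the existence of a generator in degree $2m$) with the well known linearity of powers of cochordal edge ideals. The only small point to verify is that $j=m$ always lies in the admissible range $[\lceil m/(\omega-1)\rceil,\, m]$, which follows at once from $\omega \ge 2$.
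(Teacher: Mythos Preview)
Your proof is correct and follows essentially the same approach as the paper: invoke Theorem~\ref{Thm:perfect}(b) with $j=m$ to obtain $\beta_{0,2m}(I(G)^{(m)})\neq 0$, hence $\reg\,I(G)^{(m)}\ge 2m$, and then use Herzog--Hibi--Zheng to identify $\reg\,I(G)^m=2m$ in the cochordal case. The paper's proof is just a terser version of yours, omitting the explicit check that $j=m$ lies in the admissible range and the mention of Fr\"oberg's theorem.
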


\begin{proof}
By Theorem~\ref{Thm:perfect}(b), we have $\beta_{0,2m}(I(G)^{(m)})\neq 0$, which proves the first statement.
Noting that any cochordal graph is a perfect graph, the second statement follows from the first statement and ~\cite[Theorem 3.2]{HHZ}, where it is shown that for a cochordal graph $G$, the ideal $I(G)^m$ has linear resolution. 
\end{proof}

In the following example, for the given graph $G$, we present some minimal monomial generators of the $6^{\textup{th}}$ symbolic power of $I(G)$ in each degree $m+j$, where $\lceil m/(\omega-1)\rceil\le j\le m$.

\begin{Example}
\rm Let $G$ be the graph depicted below. 
	\begin{center}
	\begin{tikzpicture}[scale=0.3]\label{Pic1}
		\draw[-] (6.,6.)--(10.,4.)--(10.,0.)--(6.,-2.);
		\draw[-] (6.,6.)--(2.,4.)--(2.,0.)--(6.,-2.) ;
		\draw[-] (6.,2.)--(10.,4.);
		\draw[-] (6.,2.)--(2.,4.);
		\draw[-] (6.,2.)--(2.,0); 
		\draw[-] (6.,2.)--(10.,0.);
		\filldraw[black] (6,-2) circle (5pt) node[below]{{$x_4$}};
	 \filldraw[black] (6,6) circle (5pt) node[above]{{$x_1$}};
		\filldraw[black] (6,2) circle (5pt) node[above]{{$x_7$}};
		\filldraw (10,4) circle (5pt) node[right]{{$x_2$}};
		\filldraw (10,0) circle (5pt) node[right]{{$x_3$}};
		\filldraw (2,0) circle (5pt) node[left]{{$x_5$}};
		\filldraw (2,4) circle (5pt) node[left]{{$x_6$}};
		\filldraw (6,-4) node[below]{$G$};
 	\end{tikzpicture}
 	\end{center}
    Since $G$ and $G^c$ have no induced odd cycles of length $r>4$, it follows that $G$ is a perfect graph.  Note that $\omega=\omega(G)=3$. Consider the ideal $I=I(G)^{(6)}$. Then it follows from Theorem~\ref{Thm:perfect} that $\beta_{0,d}(I)\neq 0$ if and only if $d=6+j$, where $3=\lceil m/(\omega-1)\rceil\leq j\leq m=6$. Hence, the minimal generators of $I$ appear in degrees $9,10,11$ and $12$. Let $d=9$. As the proof of Theorem~\ref{Thm:perfect} suggests, we may write $9=3+3+3$, and choose a clique of size $3$ in $G$, say $\{x_2,x_3,x_7\}$. Hence, $u=(x_2x_3x_7)^3$ is a minimal generator of $I$ of degree 9. Similarly, $10=2+2+3+3$, and $(x_2x_3)^2(x_2x_3x_7)^2$ is a minimal generator of degree $10$. The monomials $(x_2x_3)^4(x_2x_3x_7)$ and $(x_2x_3)^6$ are minimal generators of $I$ of degrees $11$ and $12$.
\end{Example}

For a homogeneous ideal $I\subset S$, let $\alpha(I)$ denote the \textit{initial degree} of $I$, that is the minimum integer $d$ such that $I_d\neq 0$. The {\em Waldschmidt constant} of $I$ is then defined to be $\widehat{\alpha}(I)=\lim_{m\to \infty} \alpha(I^{(m)})/m$. The following corollary of Theorem \ref{Thm:perfect} recovers~\cite[Theorem 6.7(i)]{BC}.

\begin{Corollary}
Let $G$ be a perfect graph with the clique number $\omega=\omega(G)\geq 2$, and let $I=I(G)$. Then, $\widehat{\alpha}(I)=\omega/(\omega-1)$.
\end{Corollary}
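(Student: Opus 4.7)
The plan is to read off the initial degree $\alpha(I(G)^{(m)})$ directly from Theorem~\ref{Thm:perfect}(b) and then compute the limit. Recall that $\alpha(I(G)^{(m)})$ is the smallest degree in which $I(G)^{(m)}$ has a nonzero element; since $I(G)^{(m)}$ is a monomial ideal, this coincides with the smallest degree of a minimal monomial generator. By part (b) of Theorem~\ref{Thm:perfect}, this smallest degree is obtained by choosing the smallest admissible value of $j$, namely $j=\lceil m/(\omega-1)\rceil$, so that
\[
\alpha(I(G)^{(m)})\;=\;m+\left\lceil\frac{m}{\omega-1}\right\rceil.
\]

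With this formula in hand, the Waldschmidt constant is an immediate calculation:
\[
\widehat{\alpha}(I)\;=\;\lim_{m\to\infty}\frac{\alpha(I(G)^{(m)})}{m}\;=\;\lim_{m\to\infty}\left(1+\frac{\lceil m/(\omega-1)\rceil}{m}\right)\;=\;1+\frac{1}{\omega-1}\;=\;\frac{\omega}{\omega-1},
\]
using that $\lceil m/(\omega-1)\rceil/m\to 1/(\omega-1)$ since the ceiling differs from $m/(\omega-1)$ by less than $1$.

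There is essentially no obstacle here, since all the work was already done in Theorem~\ref{Thm:perfect}(b): the nontrivial content is the determination of the generating degrees of the symbolic powers, and once we know that the minimum such degree is $m+\lceil m/(\omega-1)\rceil$, the Waldschmidt constant drops out by taking the limit. The only small point worth recording in the proof is the justification that $\alpha$ of a monomial ideal equals the least generating degree, which is standard.
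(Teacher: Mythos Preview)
Your proof is correct and follows essentially the same approach as the paper: both read off $\alpha(I(G)^{(m)})=m+\lceil m/(\omega-1)\rceil$ from Theorem~\ref{Thm:perfect}(b) and then pass to the limit. The paper additionally rewrites this as $\lceil m\omega/(\omega-1)\rceil$, but the computation of the Waldschmidt constant is identical.
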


\begin{proof}
By Theorem~\ref{Thm:perfect}, $\alpha(I(G)^{(m)})=m+\lceil m/(\omega-1)\rceil=\lceil m\omega/(\omega-1)\rceil$. Thus 

$$\widehat{\alpha}(I)=\lim_{m\to \infty} \frac{\lceil m\omega/(\omega-1)\rceil}{m}=\frac{\omega}{\omega-1}.$$
\end{proof}

\section{Componentwise linearity of symbolic powers of edge ideals}\label{sec2}

This section focuses on resolving Conjecture \ref{ConjB} for several classes of graphs. To set the stage for the main result, we provide a brief review of some key concepts.

A vertex $x$ of a graph $G$ is called a {\em simplicial vertex} of $G$ if its (open) neighborhood $N_G(x)=\{y\in V(G): \{x,y\}\in E(G)\}$ is a clique of $G$. A {\em perfect elimination ordering} of $G$ is an ordering $x_1>\cdots>x_n$ on the vertex set of $G$ such that $x_i$ is a simplicial vertex of the induced subgraph $G_i=G[\{x_i,x_{i+1},\ldots,x_n\}]$ for all $i$. By a classical result due to Dirac~\cite{Dirac}, $G$ is a chordal graph, if and only if, $G$ has a perfect elimination ordering.   

A graph $G$ is called a \textit{proper interval graph}, if there exists a labeling $\{x_1,\ldots,x_n\}$ on the vertex set of $G$ such that for any $i<j<k$,  $\{x_i,x_k\}\in E(G)$ implies that $\{x_i,x_j\}\in E(G)$ and $\{x_j,x_k\}\in E(G)$. It can be seen that $x_1>\ldots>x_n$ is a perfect elimination ordering of $G$. Hence, any proper interval graph is chordal.

A {\em cut vertex} of a connected graph $G$ is a vertex $x\in V(G)$ such that $G\setminus x$ is not connected.
A \textit{block} of a graph $G$ is a maximal induced subgraph $B$ of $G$ with the property that it is connected and contains no cut vertex. A graph $G$ is called a \textit{block graph} if all of its blocks are cliques of $G$. For instance, any forest is a block graph. In the sequel we use the following characterization of block graphs, for which we refer to~\cite{Ho}. 
\begin{Proposition}
\label{rem:block}
    A graph $G$ is a block graph if and only if $G$ is chordal and any two maximal cliques of $G$ have at most one vertex in common.    
\end{Proposition}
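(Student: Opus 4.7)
My plan is to argue both implications directly from the standard description of blocks as the maximal $2$-connected subgraphs of $G$, together with bridges and isolated vertices.

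For the forward implication, I would assume every block is a clique. Chordality follows by contradiction: any induced cycle of length $\geq 4$ is $2$-connected, so its vertex set lies inside a single block, which is a clique by hypothesis; but then the induced subgraph on these vertices is complete, contradicting the absence of chords. For the intersection condition, I would assume two maximal cliques $C_1$ and $C_2$ share distinct vertices $u$ and $v$. Then any pair of vertices in $C_1\cup C_2$ admits two internally disjoint paths (one through $u$, one through $v$), which forces $C_1\cup C_2$ to lie in a single block. Since that block is a clique, $C_1\cup C_2$ itself must be a clique, contradicting the maximality of $C_1$ and $C_2$.

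For the backward implication, I would assume $G$ is chordal and any two maximal cliques share at most one vertex. Let $B$ be a block of $G$, and suppose for contradiction that $B$ is not a clique, so there exist non-adjacent vertices $u,v\in V(B)$. The $2$-connectivity of $B$ provides two internally disjoint $u$--$v$ paths, whose union is a cycle $C$ of length $\geq 4$. Invoking chordality recursively---each chord of a cycle of length $\geq 4$ splits it into two strictly shorter sub-cycles---an induction on the length produces a triangulation of $C$: a collection of triangles $T_1,\dots,T_r$ covering $V(C)$ whose edge-adjacency structure is a tree.

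The decisive step is the observation that any two triangles sharing an edge $\{a,b\}$ must lie in the same maximal clique, for otherwise two distinct maximal cliques would both contain $a$ and $b$, violating the hypothesis. Propagating this along the adjacency tree shows that all the $T_i$, and hence all vertices of $C$, belong to a single maximal clique; in particular $u$ and $v$ are adjacent, a contradiction. I expect the main obstacle to be cleanly formalizing the triangulation and its tree-like adjacency structure, since the triangles here are purely combinatorial rather than geometric; this should nonetheless be handled by a careful induction on the length of the cycle, tracking how each chord splits the sub-cycle and ensuring that the two sub-triangulations glue together along the chord into a single tree.
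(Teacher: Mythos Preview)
The paper does not supply its own proof of this proposition; it simply refers to Howorka~\cite{Ho} for the characterization. So there is nothing to compare against at the level of argument.

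Your proposal is correct and self-contained. The forward direction is clean; the phrase ``two internally disjoint paths (one through $u$, one through $v$)'' glosses over the edge cases where one of the endpoints equals $u$ or $v$, but the underlying claim---that $G[C_1\cup C_2]$ has no cut vertex---is immediate once you observe that deleting any single vertex still leaves at least one of $u,v$ in both $C_1$ and $C_2$. For the backward direction, your triangulation-and-propagation argument is sound: the recursive chord-splitting of a cycle of length $\ge 4$ in a chordal graph does produce a family of triangles whose shared-edge adjacency graph is a tree (this follows directly from the recursion, since each chord joins the two dual subtrees by a single edge), and the hypothesis on maximal cliques then forces all triangles into a single maximal clique. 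The formalization you flag as the main obstacle is genuinely the only place requiring care, but a straightforward induction on the cycle length, carrying the invariant ``each boundary edge lies in exactly one triangle,'' handles it. An alternative route you might find cleaner is to use the clique-tree structure of chordal graphs: the hypothesis forces every edge of the clique tree to be labeled by at most one vertex, from which the block decomposition into cliques follows; but your direct argument is perfectly adequate and avoids importing that machinery.
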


Here is a typical example of a block graph.\smallskip

\begin{figure}[H]
\centering
\resizebox{0.5\textwidth}{!}{%
\begin{circuitikz}
\tikzstyle{every node}=[font=\LARGE]
\node at (7.75,8.5) [circ] {};
\node at (7.75,8.5) [circ] {};
\draw [short] (7.75,8.5) -- (9,9.5);
\draw [short] (9,9.5) -- (10.25,8.5);
\draw [short] (7.75,8.5) -- (8.25,7);
\draw [short] (8.25,7) -- (9.75,7);
\draw [short] (9.75,7) -- (10.25,8.5);
\node at (9,9.5) [circ] {};
\node at (8.25,7) [circ] {};
\node at (9.75,7) [circ] {};
\node at (10.25,8.5) [circ] {};
\draw [short] (9,9.5) -- (9.75,7);
\draw [short] (9,9.5) -- (8.25,7);
\draw [short] (8.25,7) -- (10.25,8.5);
\draw [short] (10.25,8.5) -- (7.75,8.5);
\draw [short] (7.75,8.5) -- (9.75,7);
\draw [short] (9,9.5) -- (8.25,10.75);
\draw [short] (8.25,10.75) -- (9.75,10.75);
\draw [short] (9.75,10.75) -- (9,9.5);
\node at (8.25,10.75) [circ] {};
\node at (9.75,10.75) [circ] {};
\node at (7.75,8.5) [circ] {};
\draw [short] (8.25,7) -- (6.75,7);
\draw [short] (6.75,7) -- (6.75,5.5);
\draw [short] (6.75,5.5) -- (8.25,5.5);
\draw [short] (8.25,7) -- (8.25,5.5);
\node at (6.75,7) [circ] {};
\node at (6.75,5.5) [circ] {};
\node at (8.25,5.5) [circ] {};
\node at (6.75,9.75) [circ] {};
\draw [short] (6.75,9.75) -- (7.75,8.5);
\draw [short] (6.75,7) -- (8.25,5.5);
\draw [short] (8.25,7) -- (6.75,5.5);
\draw [short] (10.25,8.5) -- (12,8.5);
\draw [short] (10.25,8.5) -- (11,9.75);
\draw [short] (11,9.75) -- (11.75,8.5);
\draw [short] (11.75,8.5) -- (12.5,9.75);
\draw [short] (11.75,8.5) -- (11.75,7.25);
\draw [short] (11.75,7.25) -- (11,6);
\draw [short] (11,6) -- (12.5,6);
\draw [short] (12.5,6) -- (11.75,7.25);
\draw [short] (12.5,8.5) -- (12,8.5);
\draw [short] (12.5,8.5) -- (13.25,9.25);
\draw [short] (13.25,9.25) -- (13.25,10.75);
\draw [short] (13.25,10.75) -- (14.75,10.75);
\draw [short] (14.75,10.75) -- (14.75,9.25);
\draw [short] (14.75,9.25) -- (13.25,9.25);
\node at (11.75,7.25) [circ] {};
\node at (11,6) [circ] {};
\node at (12.5,6) [circ] {};
\node at (11.75,8.5) [circ] {};
\node at (11,9.75) [circ] {};
\node at (12.5,9.75) [circ] {};
\node at (13.25,9.25) [circ] {};
\node at (12.5,8.5) [circ] {};
\node at (13.25,10.75) [circ] {};
\node at (14.75,10.75) [circ] {};
\node at (14.75,9.25) [circ] {};
\draw [short] (13.25,10.75) -- (14.75,9.25);
\draw [short] (14.75,10.75) -- (13.25,9.25);
\draw [short] (13.25,7.75) -- (13,6.75);
\draw [short] (13.25,7.75) -- (14.25,8);
\draw [short] (12.5,8.5) -- (13.25,7.75);
\node at (13.25,7.75) [circ] {};
\node at (13,6.75) [circ] {};
\node at (14.25,8) [circ] {};
\end{circuitikz}
}
\end{figure}

For an integer $n$, we set $[n]=\{1,2,\dots,n\}$. Given a non-empty subset $A$ of $[n]$ and a monomial $u=x_1^{a_1}\cdots x_n^{a_n}\in S$, we set $P_A=(x_i:i\in A)$ and $u_A=\prod_{i\in A}x_i^{a_i}$.

The following simple lemma will be used several times.
\begin{Lemma}\label{Lem:sigma}
    Let $A_1,\dots,A_m$ be non-empty subsets of $[n]$,  $k_1,\dots,k_m$ be positive integers and let $u=x_1^{a_1}\cdots x_n^{a_n}\in S$ be a monomial of degree $d$. Then, $u\in\bigcap_{i=1}^mP_{[n]\setminus A_i}^{k_i}$ if and only if
    $$
    \deg(u_{A_i})\le d-k_i,\,\,\,\,\,\textit{for all}\,\,1\le i\le m.
    $$
\end{Lemma}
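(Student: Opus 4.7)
The plan is to reduce the $m$-fold intersection to $m$ independent pointwise equivalences, one per index $i$. So it suffices to establish the following single-prime version: for any non-empty $B\subseteq[n]$, any positive integer $k$, and any monomial $u=x_1^{a_1}\cdots x_n^{a_n}$, we have $u\in P_B^k$ if and only if $\deg(u_B)\ge k$, where $u_B=\prod_{i\in B}x_i^{a_i}$.

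To prove this basic fact I would argue as follows. The ideal $P_B^k$ is a monomial ideal whose minimal monomial generators are exactly the squarefree-in-$B$-with-multiplicity products $\prod_{j=1}^{k}x_{i_j}$ with $i_1,\ldots,i_k\in B$ (repetitions allowed). A monomial lies in a monomial ideal precisely when it is divisible by one of its monomial generators. Therefore $u\in P_B^k$ iff one can choose $k$ factors (with multiplicity) from the factorization of $u$, all of whose indices lie in $B$. This is possible if and only if $\sum_{i\in B}a_i\ge k$, i.e., $\deg(u_B)\ge k$.

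Applying this with $B=[n]\setminus A_i$ and $k=k_i$, the condition $u\in P_{[n]\setminus A_i}^{k_i}$ becomes $\deg(u_{[n]\setminus A_i})\ge k_i$. Since $A_i\cup([n]\setminus A_i)=[n]$ is a disjoint decomposition, we have $d=\deg(u)=\deg(u_{A_i})+\deg(u_{[n]\setminus A_i})$, so this rearranges to $\deg(u_{A_i})\le d-k_i$. Intersecting these equivalent conditions over $i=1,\dots,m$ yields the claim.

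There is essentially no obstacle here; the lemma is a bookkeeping statement whose only real content is the standard characterization of monomial membership in a power of a monomial prime. The result will be used repeatedly in Section \ref{sec2}, since every associated prime of $I(G)$ is generated by a subset of the variables, so all symbolic powers of $I(G)$ are cut out by intersections of such prime powers, and this lemma lets one translate containment into simple degree inequalities on restricted monomials.
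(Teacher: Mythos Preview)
Your proof is correct and follows essentially the same approach as the paper: both reduce to the single-prime criterion $u\in P_{[n]\setminus A_i}^{k_i}\iff\sum_{j\in[n]\setminus A_i}a_j\ge k_i$ and then rewrite this via $d=\deg(u_{A_i})+\deg(u_{[n]\setminus A_i})$. The only difference is that you spell out the monomial-membership justification for $u\in P_B^k\iff\deg(u_B)\ge k$ in slightly more detail than the paper does.
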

\begin{proof}
    We have $u\in P_{[n]\setminus A_i}^{k_i}$ if and only if
    $
    \sum_{j\in[n]\setminus A_i}a_j\ge k_i.
    $
    Since $\sum_{j\in[n]\setminus A_i}a_j=d-\sum_{j\in A_i}a_j$, the previous inequality holds if and only if
    $
    \deg(u_{A_i})=\sum_{j\in A_i}a_j\le d- k_i.
    $
\end{proof}

Recall that an \textit{independent set} of $G$ is a subset $A$ of $V(G)$ such that no two vertices of $A$ are adjacent in $G$. The set of all independent sets of $G$ is a simplicial complex $\Delta_G$, called the \textit{independence complex} of $G$. 
A {\em vertex cover} of $G$ is a subset $C\subseteq V(G)$ which intersects each edge of $G$. A minimal set with  such property is called a {\em minimal vertex cover} of $G$.

As customary, if $\Delta$ is a simplicial complex, we denote by $\mathcal{F}(\Delta)$ the set consisting of the facets of $\Delta$. Then  
   $$
    I(G)^{(k)}\ =\ \bigcap_{A\in\mathcal{F}(\Delta_G)}P_{[n]\setminus A}^k.
    $$
    
Indeed, $P_C\in\Ass\,I(G)$ if and only if $C$ is a minimal vertex cover of $G$, which means that $C=[n]\setminus A$ for a maximal independent set $A\in\mathcal{F}(\Delta_G)$. Notice that any maximal independent set $A\in\mathcal{F}(\Delta_G)$ is a maximal clique of $G^c$. We will use this basic fact several times.

A homogeneous ideal $I\subset S$ is called {\em componentwise linear}, if the ideal $$I_{\langle d\rangle}=(f\in I, \, f \textrm{ is homogeneous of degree } d)$$ has linear resolution for any positive integer $d$. A useful approach to show that an ideal is componentwise linear is to show that it has (componentwise) linear quotients. Indeed, ideals with (componentwise) linear quotients are componentwise linear \cite[Theorem 8.2.15]{HHBook}.  Recall that a monomial ideal $I$ has {\em linear quotients} if the minimal generators of $I$ can be ordered as $u_1,\ldots,u_s$ such that for each $i = 2,\ldots,s$, the ideal $(u_1,\ldots,u_{i-1}):(u_i)$ is generated by variables.
In the following, for two monomials $u$ and $v$, we set $u:v=u/\gcd(u,v)$. Notice that
$(u_1,\ldots,u_{i-1}):(u_i)=(u_j:u_i\, |\, 1\le j\le i-1)$.

\begin{Theorem}\label{Thm:forestComLinear}
Let $G$ be one of the following graphs:
\begin{enumerate}
\item[(a)] Complement of a block graph,
\item[(b)] Complement of a proper interval graph,
\item[(c)] $G$ is a cochordal graph with the property that any vertex in $G$ belongs to at most two maximal independent sets of $G$.
\end{enumerate}
Then $\bigcap_{A\in\mathcal{F}(\Delta_G)}P_{[n]\setminus A}^{k_A}
$
is componentwise linear, for any positive integers $k_A$. In particular, $I(G)^{(k)}$ is componentwise linear for all $k$.
\end{Theorem}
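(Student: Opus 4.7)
My plan is to prove the stronger statement that $J:=\bigcap_{A\in\mathcal{F}(\Delta_G)}P_{[n]\setminus A}^{k_A}$ has componentwise linear quotients. Componentwise linearity then follows from \cite[Theorem 8.2.15]{HHBook}, and the in-particular statement for $I(G)^{(k)}$ is obtained by specializing all $k_A=k$. Since, by Lemma~\ref{Lem:sigma}, the minimal generators of $J_{\langle d\rangle}$ are exactly the degree $d$ monomials $u$ satisfying $\deg(u_A)\leq d-k_A$ for every $A\in\mathcal{F}(\Delta_G)$, the task reduces to exhibiting, for each $d$, a linear quotients order on these generators.

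Because $G^c$ is chordal in every case, I fix a perfect elimination ordering $x_1>x_2>\cdots>x_n$ of $G^c$, refined so that in case (a) the cut vertices of $G^c$ come last (achievable because the non-cut vertices of any block graph coincide with its simplicial vertices, and induced subgraphs of block graphs remain block graphs, so simplicial non-cut vertices can be peeled off iteratively), and in case (b) the labeling is compatible with the defining labeling of $G^c$ as a proper interval graph. I then order the minimal generators of $J_{\langle d\rangle}$ by decreasing lex.

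The core step is the following swap lemma: given minimal generators $u>_{\mathrm{lex}}v$, there exist indices $\ell<m$ with $u_\ell>v_\ell$ and $v_m>u_m$ such that $w:=v\cdot x_\ell/x_m$ again lies in $J$. Granting this, $w$ is a lex-larger minimal generator of $J_{\langle d\rangle}$ with $w/\gcd(w,v)=x_\ell$, so $x_\ell$ belongs to the colon ideal $(w):(v)$, and since $x_\ell$ divides $u/\gcd(u,v)$ by construction, the linear quotient property for the pair $(u,v)$ is witnessed. By Lemma~\ref{Lem:sigma}, $w\in J$ is equivalent to $x_m\in\bigcap_{A\in T_\ell(v)}A$, where $T_\ell(v):=\{A\in\mathcal{F}(\Delta_G)\colon x_\ell\in A,\ \deg(v_A)=d-k_A\}$ is the set of tight facets at $v$ containing $x_\ell$. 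Taking $\ell$ to be the minimal index with $u_\ell>v_\ell$, a degree comparison using $u\in J$ together with the fact that $u$ and $v$ agree on all positions $<\ell$ shows that every $A\in T_\ell(v)$ contains some $x_p$ with $p>\ell$ and $v_p>u_p$. The challenge is to produce a single such $x_m$ belonging to every $A\in T_\ell(v)$ simultaneously.

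This is where each of the three structural hypotheses is used. In case (c) the assumption gives $|T_\ell(v)|\leq 2$ immediately, and a direct two-clique analysis, exploiting that two maximal cliques may share several vertices here, produces the desired $x_m$ in their intersection. In case (b) the linear arrangement of maximal cliques of a proper interval graph forces $\bigcap T_\ell(v)$ to be contiguous, and a direct inspection locates $x_m$. Case (a) is the main obstacle: any two maximal cliques of a block graph share at most one vertex, so when $|T_\ell(v)|\geq 2$ the intersection collapses to $\{x_\ell\}$ and no swap variable is available at the minimal choice of $\ell$. I plan to circumvent this by replacing $\ell$ with an alternative differing index $\ell'$ whose tight set $T_{\ell'}(v)$ has a non-trivial common variable; the refinement that places cut vertices of $G^c$ last in the PEO is calibrated precisely so that, when $x_\ell$ is a cut vertex, enough earlier positions have been fixed for such an $\ell'$ to exist. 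Verifying that this rerouting always succeeds in every block graph, through a careful case analysis on the block structure of $G^c$ at $x_\ell$, is the technically delicate step of the proof.
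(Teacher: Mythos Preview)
Your overall framework is exactly the paper's: show that $J_{\langle d\rangle}$ has linear quotients with respect to the lex order induced by a perfect elimination ordering of $G^c$, by taking the minimal differing index $\ell$ and finding a swap variable $x_m$ with $m>\ell$, $v_m>0$, and $x_m\in\bigcap_{A\in T_\ell(v)}A$. Cases (b) and (c) are handled essentially as you describe.

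However, your diagnosis of case (a) is mistaken, and the rerouting/refined-PEO machinery you propose is unnecessary. You correctly observe that every $A\in T_\ell(v)$ must contain some $x_p$ with $p>\ell$; since $A$ is a clique of $G^c$ through $x_\ell$, this $x_p$ lies in $N_{H_\ell}(x_\ell)$. Now $N_{H_\ell}[x_\ell]$ is a clique of size at least two, hence contained in a \emph{unique} maximal clique $C$ of the block graph $G^c$ (by Proposition~\ref{rem:block}, two distinct maximal cliques share at most one vertex). Since $\{x_\ell,x_p\}\subseteq A\cap C$ has size two, the same proposition forces $A=C$. Thus $|T_\ell(v)|\le 1$ for the minimal index $\ell$, and any $x_m\in N_{H_\ell}(x_\ell)$ with $v_m>0$ (which exists by your own degree comparison) serves as the swap variable. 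This is precisely the paper's argument, which uses an arbitrary perfect elimination ordering and never leaves the minimal index. The situation $|T_\ell(v)|\ge 2$ that you worry about simply cannot occur in case (a), so the ``technically delicate step'' you anticipate does not arise.
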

\begin{proof}
Let $J=\bigcap_{A\in\mathcal{F}(\Delta_G)}P_{[n]\setminus A}^{k_A}$. We show that $J_{\langle d\rangle}$ has linear quotients for all $d$, which will imply that  $J_{\langle d\rangle}$ has linear resolution. If $J_{\langle d\rangle}=0$, there is nothing to prove. So we assume that $J_{\langle d\rangle}\neq 0$. Under any of the assumptions (a), (b) or (c), the graph $G$ is cochordal. Let $x_1>\cdots >x_n$ be a perfect elimination order of $G^c$, and consider the lex order $>$ on $J_{\langle d\rangle}$ induced by this order. Let $u=x_1^{a_1}\cdots x_n^{a_n}$ and $v=x_1^{b_1}\cdots x_n^{b_n}$ be two generators of $J_{\langle d\rangle}$ of degree $d$, with $u>v$. Let $i$ be the integer with $a_i>b_i$ and $a_j=b_j$ for $j<i$. By assumption $x_i$ is a simplicial vertex of the graph $H_i=G^c[\{x_i,x_{i+1},\ldots,x_n\}]$. Notice that since $u$ and $v$ have the same degree, there exists an integer $\ell>i$ such that $b_{\ell}>0$. 
We set 
$$L=\{x_\ell\in N_{H_i}(x_i)\ :\ b_\ell>0\}.$$

Clearly, $L\subseteq \{x_{i+1},\ldots,x_n\}$. First assume that $L=\emptyset$. Let $t$ be an integer $>i$ such that $b_t>0$. We set $w=x_iv/x_t$. Obviously, $w>v$, $w:v=x_i$ and $x_i$ divides $u:v$. Since $\deg w=d$, it remains to show that $w\in J$. By Lemma~\ref{Lem:sigma}, $\deg(u_A)\leq d-k_A$ and $\deg(v_A)\leq d-k_A$, for any $A\in\mathcal{F}(\Delta_G)$. Using Lemma~\ref{Lem:sigma} once again, we need to show that $\deg(w_A)\leq d-k_A$ for any $A\in\mathcal{F}(\Delta_G)$. Let $A\in\mathcal{F}(\Delta_G)$. If $x_i\notin A$, then $\deg(w_A)\leq \deg(v_A)\leq d-k_A$. Suppose now that $x_i\in A$. Since $A$ is a maximal clique of $G^c$, it follows that $A\cap \{x_{i+1},\ldots,x_n\}\subseteq N_{H_i}(x_i)$. Let $w=x_1^{c_1}\cdots x_n^{c_n}$. Then $c_j=b_j$ for $j<i$, and $c_i=b_i+1\leq a_i$.
Moreover, from the inclusion $A\cap \{x_{i+1},\ldots,x_n\}\subseteq N_{H_i}(x_i)$ and that $L=\emptyset$, it follows that $c_j=0$ for any $j>i$ with $x_j\in A$. Therefore, $\deg(w_A)\le\deg(u_A)\le d-k_A$, as desired. 

Now, suppose that $L$ is non-empty. Let $t$ be the minimal integer such that $x_t\in L$. We set $w=x_iv/x_t$. Write $w=x_1^{c_1}\cdots x_n^{c_n}$. We have
\begin{equation}\label{eq:abc}
   c_j\ =\ \begin{cases}
    \,b_i+1&\textup{if}\ j=i,\\
    \,b_t-1&\textup{if}\ j=t,\\
    \hfil b_j&\textup{otherwise}.
\end{cases} 
\end{equation}

We need to show that $\deg(w_A)\leq d-k_A$ for any $A\in\mathcal{F}(\Delta_G)$. Let $A\in\mathcal{F}(\Delta_G)$. 
If $x_i\notin A$, then as before, $\deg(w_A)\leq \deg(v_A)\leq d-k_A$. Now, suppose that $x_i\in A$. Next we discuss each of the cases (a), (b) and (c).

\medskip
(a) Since $\emptyset\neq L\subseteq N_{H_i}(x_i)$, we have $|N_{H_i}[x_i]|\geq 2$, where $N_{H_i}[x_i]=N_{H_i}(x_i)\cup\{x_i\}$.
By Proposition~\ref{rem:block}, for any other maximal clique $B\neq A$ of $G^c$ which contains $x_i$ we have $A\cap B=\{x_i\}$. Therefore, $N_{H_i}[x_i]$ is contained in precisely one maximal clique $C$ of $G^c$. Let $C'=C\setminus N_{H_i}(x_i)$ and let $B\ne C$ be an arbitrary maximal clique of $G^c$ containing $x_i$. We claim that $C',B\subseteq\{x_1,\dots,x_i\}$. Indeed, if $x_j\in C'$ for some $j>i$, then $x_j\in N_{H_i}(x_i)$, which is impossible. Similarly, if $x_j\in B$ for some $j>i$, then $x_j\in N_{H_i}(x_i)$ and $\{x_i,x_j\}\in B\cap C$ which is not possible by Proposition~\ref{rem:block}.

Now, we show that $\deg(w_A)\leq d-k_A$. First assume that $A=C$. Notice that $x_t\in N_{H_i}(x_i)\subseteq C$. Then by (\ref{eq:abc}) we have $\deg(w_C)=\deg(v_C)\leq d-k_C$.
Otherwise, if $A\ne C$, then as was shown above, $A\subseteq\{x_1,\dots,x_i\}$. Since $c_j=b_j=a_j$ for $j<i$ and $c_i=b_i+1\le a_i$, by (\ref{eq:abc}) we have $\deg(w_A)\le\deg(u_A)\le d-k_A$.

\medskip
(b) If $x_t\notin A$, then $x_j\notin A$ for any $j>t$, since $G^c$ is a proper interval graph and $i<t<j$. In other words, $A\subseteq\{x_j:i-s\le j\le t-1\}$ for some non-negative integer $s$. 
We have $c_{\ell}= a_{\ell}$ for any $\ell<i$, and $c_i=b_i+1\leq a_i$. Moreover, by the choice of $t$ it follows that for any $x_{\ell}\in A$ with $\ell>i$, $c_{\ell}=b_{\ell}=0$.
So $\deg(w_A)\le\deg(u_A)\le d-k_A$. If $x_t\in A$, then $\deg(w_A)=(b_i+1)+(b_t-1)+\deg(w_{A\setminus \{x_i,x_t\}})=\deg(v_A)\le d-k_A$.

\medskip
(c) By our assumption $x_i$ belongs to at most two maximal cliques of $G^c$. If $A$ is the only maximal clique which contains $x_i$, then $L\subseteq N_{H_i}[x_i]\subseteq A$. Therefore, $x_t\in A$ and hence $$\deg(w_{A})=(b_i+1)+(b_t-1)+\deg(w_{A\setminus \{x_i,x_t\}})=\deg(v_{A})\le d-k_{A}.$$ 

Now, suppose $x_i$ belongs to two maximal cliques of $G^c$, say $A_1$ and $A_2$. We may assume that $N_{H_i}[x_i]\subseteq A_1$. Then $\deg(w_{A_1})\leq \deg(v_{A_1})\le d-k_{A_1}$ because $t\in A_1$. If $A_2\cap N_{H_i}(x_i)=\emptyset$, then $\deg(w_{A_2})\leq \deg(u_{A_2})\le d-k_{A_2}$ because $A_2\subseteq\{x_1,\dots,x_i\}$. Now, suppose that $A_2\cap N_{H_i}(x_i)\neq\emptyset$. If $b_{\ell}=0$ for all $x_{\ell}\in A_2\cap N_{H_i}(x_i)$, then $\deg(w_{A_2})\leq \deg(u_{A_2})\le d-k_{A_2}$. Therefore, in this case $\deg(w_{A})\leq d-k_{A}$  for $A\in\{A_1,A_2\}$.

Finally, if $b_{\ell}>0$ for some $x_{\ell}\in A_2\cap N_{H_i}(x_i)$, we redefine $w$ as $w=x_i(v/x_t)$, where $t=\ell$. It follows that $\deg(w_{A})\le d-k_{A}$ for $A\in\{A_1,A_2\}$, as desired.
\end{proof}

The following picture gives an example of a chordal graph $G$ whose complement $G^c$ satisfies condition (c) in Theorem \ref{Thm:forestComLinear} but is neither a block graph nor a proper interval graph.
\begin{figure}[H]
\centering
\begin{tikzpicture}[scale=0.8]
\filldraw (0,0) circle (2pt) node[below]{};
\filldraw (2,0) circle (2pt) node[below]{};
\filldraw (3,0) circle (2pt) node[below]{};
\filldraw (1,1) circle (2pt) node[above]{};
\filldraw (1,-1) circle (2pt) node[below]{};
\filldraw (4,1) circle (2pt) node[above]{};
\filldraw (4,-1) circle (2pt) node[below]{};
\filldraw (4.5,-1.7) circle (2pt) node[below]{};
\filldraw (4.5,1.7) circle (2pt) node[above]{};
\draw[-] (0,0) -- (1,1) -- (2,0) -- (3,0) -- (4,1) -- (4,-1) -- (3,0);
\draw[-] (0,0) -- (1,-1) -- (2,0);
\draw[-] (4,1) -- (4.5,1.7);
\draw[-] (4,-1) -- (4.5,-1.7);
\draw[-] (1,1) -- (1,-1);
\end{tikzpicture}
\end{figure}

\begin{Corollary}\label{cor:goodcoffee}
    Let $G$ be one of the graphs considered in Theorem \ref{Thm:forestComLinear}. Then, $\reg\,I(G)^{(k)}=\reg\,I(G)^{k}=2k$ for all $k$.
\end{Corollary}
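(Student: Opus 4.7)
The plan is to combine three ingredients already established in the paper: the componentwise linearity of $I(G)^{(k)}$ from Theorem \ref{Thm:forestComLinear}, the determination of the top generating degree from Theorem \ref{Thm:perfect}(b), and the Herzog--Hibi--Zheng theorem for ordinary powers of edge ideals of cochordal graphs.

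First I would observe that in each of the cases (a), (b), (c) the graph $G$ is cochordal: in (a) and (b) this is by definition (block graphs and proper interval graphs are chordal, so their complements are cochordal), and in (c) this is assumed. By \cite[Theorem 3.2]{HHZ}, every power $I(G)^k$ then has a linear resolution, which gives $\reg\,I(G)^k = 2k$ directly. This handles the second equality with no extra work.

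For the first equality, I would argue as follows. Any cochordal graph is perfect, so Theorem \ref{Thm:perfect}(b) applies and tells us that the nonzero graded Betti numbers $\beta_{0,d}(I(G)^{(k)})$ occur precisely for $d = k + j$ with $\lceil k/(\omega-1)\rceil \le j \le k$. The largest such $d$ is achieved at $j = k$, giving highest generating degree $2k$. On the other hand, by Theorem \ref{Thm:forestComLinear}, the ideal $I(G)^{(k)}$ is componentwise linear. A standard property of componentwise linear ideals (see \cite[Corollary 8.2.14]{HHBook}, already cited in the introduction) states that their regularity equals their highest generating degree. Combining these two facts yields $\reg\,I(G)^{(k)} = 2k$, which matches $\reg\,I(G)^k$.

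I do not expect any genuine obstacle here: the corollary is a clean assembly of three results, two of which (Theorems \ref{Thm:perfect} and \ref{Thm:forestComLinear}) have already been proved in the paper and one of which is a well-known citation. The only point to be slightly careful about is to invoke Theorem \ref{Thm:perfect}(b) with $j = k$ to see that $2k$ really is attained as a generating degree (so that the equality $\reg = $ highest generating degree produces exactly $2k$, not something smaller), and to observe that each class (a), (b), (c) lies within the cochordal (hence perfect) setting where both Theorem \ref{Thm:perfect} and \cite[Theorem 3.2]{HHZ} apply. The whole argument should fit in a few lines.
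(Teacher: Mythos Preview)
Your proposal is correct and follows essentially the same approach as the paper's proof: componentwise linearity from Theorem \ref{Thm:forestComLinear} combined with \cite[Corollary 8.2.14]{HHBook} gives $\reg\,I(G)^{(k)}$ equal to the top generating degree, which is $2k$ by Theorem \ref{Thm:perfect}, and $\reg\,I(G)^k=2k$ by \cite[Theorem 3.2]{HHZ}. Your write-up simply makes explicit a couple of points (why each class is cochordal, and why $j=k$ realizes the top degree) that the paper leaves implicit.
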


\begin{proof}
Since $I(G)^{(k)}$ is componentwise linear, by \cite[Corollary 8.2.14]{HHBook}, $\reg\,I(G)^{(k)}$ is equal to the highest degree of a minimal generator of $I(G)^{(k)}$. By Theorem~\ref{Thm:perfect}, this degree is $2k$. The equality $\reg\,I(G)^{k}=2k$ holds by~\cite[Theorem 3.2]{HHZ}. 
\end{proof}

Let $G_1$ and $G_2$ be graphs on disjoint vertex sets. The {\em join} of $G_1$ and $G_2$ is defined as the graph $G_1*G_2$ with the vertex set $V(G_1)\cup V(G_2)$ and the edge set $E(G_1)\cup E(G_2)\cup\{\{x,y\}:\, x\in V(G_1), y\in V(G_2)\}$. One can define the join operation inductively for any finite number of graphs on disjoint vertex sets.

\begin{Corollary}
Let $G=G_1*\cdots*G_r$, where each $G_i$ belongs to one of the families of graphs in Theorem~\ref{Thm:forestComLinear}. Then $\reg\,I(G)^{(k)}=\reg\,I(G)^{k}=2k$ for all $k$.
\end{Corollary}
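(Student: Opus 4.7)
The plan is to reduce to showing $I(G)^{(k)}$ is componentwise linear, and then obtain this by adapting the proof of Theorem~\ref{Thm:forestComLinear} to the join setting. First, since each $G_i^c$ is chordal by hypothesis, their disjoint union $G^c=G_1^c\sqcup\cdots\sqcup G_r^c$ is chordal, so $G$ is cochordal. This gives $\reg\, I(G)^k=2k$ via~\cite[Theorem 3.2]{HHZ}, and since $G$ is cochordal (hence perfect), Corollary~\ref{inequalityreg} gives $\reg\, I(G)^{(k)}\ge 2k$. Once $I(G)^{(k)}$ is shown to be componentwise linear, Theorem~\ref{Thm:perfect}(b) together with~\cite[Corollary 8.2.14]{HHBook} forces $\reg\, I(G)^{(k)}$ to equal its highest generating degree $2k$, closing the argument.

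To establish componentwise linearity, I would exploit the structure of the join. Any independent set of $G$ is confined to a single $V(G_i)$, since vertices in distinct $V(G_i)$ and $V(G_j)$ are adjacent in $G$; consequently $\mathcal{F}(\Delta_G)=\bigcup_{i=1}^r\mathcal{F}(\Delta_{G_i})$. Concatenating a perfect elimination ordering of each $G_i^c$ yields a perfect elimination ordering of $G^c$, with the crucial property that for any vertex $x_i\in V(G_{i_0})$, its neighbors in $G^c[\{x_i,\ldots,x_n\}]$ lie entirely in $V(G_{i_0})$, and any maximal independent set of $G$ containing $x_i$ is itself a maximal independent set of $G_{i_0}$ contained in $V(G_{i_0})$. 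With this set-up, I would replay the linear-quotients argument of Theorem~\ref{Thm:forestComLinear}: for minimal generators $u>v$ of $I(G)^{(k)}_{\langle d\rangle}$ first differing at $x_i$, set $w=x_iv/x_t$ for an appropriately chosen $t$, and verify via Lemma~\ref{Lem:sigma} that $\deg(w_A)\le d-k$ for every $A\in\mathcal{F}(\Delta_G)$.

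The main obstacle is that the case analysis in the proof of Theorem~\ref{Thm:forestComLinear} branches into (a), (b), (c) according to which family $G$ lies in, whereas here different factors $G_i$ may belong to different families. The key observation that defuses this is that whenever $x_i\in A$, the set $A$ lies entirely in $V(G_{i_0})$, so the relevant combinatorial structure is that of $G_{i_0}$; accordingly one applies the argument from case (a), (b), or (c) matching the family to which $G_{i_0}$ belongs. Since the neighbors of $x_i$ in $G^c[\{x_i,\ldots,x_n\}]$ also lie in $V(G_{i_0})$, the block-graph, proper-interval, and two-maximal-clique computations all localize to $G_{i_0}^c$ and transfer unchanged. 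The sets $A\in\mathcal{F}(\Delta_G)$ not containing $x_i$ are handled exactly as in the proof of Theorem~\ref{Thm:forestComLinear}, since the estimate $\deg(w_A)\le\deg(v_A)$ is automatic from $c_j\le b_j$ for $j\ne i$.
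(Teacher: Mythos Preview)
Your argument is correct, but it takes a different route from the paper. The paper's proof is a two-line application of an external formula: by \cite[Theorem 3.2]{KKS} one has $\reg\,I(G)^{(k)}=\max\{\reg\,I(G_j)^{(i)}-i+k:1\le i\le k,\,1\le j\le r\}$ for a join, and then Corollary~\ref{cor:goodcoffee} applied to each factor gives $\reg\,I(G_j)^{(i)}=2i$, so the maximum is $2k$. The ordinary-power side is handled exactly as you do.

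Your approach bypasses \cite{KKS} entirely and instead proves the stronger statement that $I(G)^{(k)}$ is componentwise linear, by localizing the proof of Theorem~\ref{Thm:forestComLinear}. The localization works because the facets of $\Delta_G$ containing a given vertex $x_i$, as well as $N_{H_i}(x_i)$, all sit inside the single factor $V(G_{i_0})$, so the case analysis (a)/(b)/(c) can be run using only the structure of $G_{i_0}^c$; meanwhile, facets not containing $x_i$ are handled uniformly by $\deg(w_A)\le\deg(v_A)$ regardless of the choice of $t$. This is more work than the paper's proof, but it buys you more: it verifies Conjecture~\ref{ConjB} for this class of joins, not just the regularity equality, and it is self-contained within the paper's framework rather than importing the KKS join formula.
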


\begin{proof}
By~\cite[Theorem 3.2]{KKS} and Corollary~\ref{cor:goodcoffee}, 
$$\reg\,I(G)^{(k)}=\max\{\reg\,I(G_j)^{(i)}-i+k\ :\  1\leq i\leq k, 1\le j\le r\}=2k.$$ 
On the other hand, the graph $G^c$ is the disjoint union of $G_1^c,\ldots,G_r^c$. Therefore it is chordal. Hence, \cite[Theorem 3.2]{HHZ} implies that $\reg\,I(G)^{k}=2k$.
\end{proof}

\section{The second symbolic power of edge ideals}\label{sec3}

In this section, we address Conjectures \ref{ConjB} and \ref{ConjC} for the second symbolic power of edge ideals. We prove that $I(G)^{(2)}$ has linear quotients if $G$ is a cochordal graph.

To this end, let $G$ be a cochordal graph. Since $G$ is perfect, Theorem \ref{Thm:perfect} implies that
\begin{align}
    \label{eq:Minh2}I(G)^{(2)}\ &=\ K_3(G)+I(G)^2.
\end{align}

We will need the following technical lemmas. For a monomial ideal $I$, the unique set of minimal monomial generators of $I$
is denoted by $\mathcal{G}(I)$.

\begin{Lemma}\label{Lem:LinQuotTech}
Let $I_1,\dots,I_t$ be equigenerated monomial ideals with linear quotients, with $\alpha(I_j)=d_j$ and $d_1\le\cdots\le d_t$. Suppose that the following property is satisfied:\medskip\\
$(*)$ For all $u\in\mathcal{G}(I_i)$ and $v\in\mathcal{G}(I_j)$ with $i<j$ and $\deg(u:v)>1$, there exists a monomial $w$ belonging to $$(I_1\cup\dots\cup I_{j-1})\cup\{w\in\mathcal{G}(I_j):w>v\ \text{in the linear quotients order of}\ I_j\}$$ such that $\deg(w:v)=1$ and $w:v$ divides $u:v$.\medskip\\
Then $I=I_1+\dots+I_t$ has linear quotients. 
\end{Lemma}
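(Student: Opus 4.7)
The plan is to fix a total order on $\mathcal{G}(I)$ realizing linear quotients for $I = I_1 + \cdots + I_t$. For each $u \in \mathcal{G}(I)$, let $j(u)$ be the smallest index $j$ with $u \in \mathcal{G}(I_j)$. Order $\mathcal{G}(I)$ first by increasing $j(u)$ and, within each level $\{u : j(u)=j\}$, by the linear quotients order inherited from $I_j$. I then want to show: for any $u, v \in \mathcal{G}(I)$ with $u$ preceding $v$ in this order, there exists $u^* \in \mathcal{G}(I)$ preceding $v$ such that $u^*:v$ is a single variable dividing $u:v$.

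The workhorse will be the elementary monomial identity that $w' \mid w$ implies $w':v \mid w:v$ (verified by comparing exponents variable-by-variable), combined with the basic fact that two distinct elements of $\mathcal{G}(I)$ cannot divide one another. Together these let me take any witness $w \in I$ with $w:v = x_a$ a variable, pass to a minimal generator $w^* \in \mathcal{G}(I)$ dividing $w$, and conclude that $w^*:v \in \{1, x_a\}$. The value $1$ forces $w^* = v$, which will always be ruled out by degree considerations.

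Set $i := j(u)$ and $j := j(v)$, so $i \le j$. If $i = j$, invoke the linear quotients of $I_j$ to produce some $w \in \mathcal{G}(I_j)$ strictly before $v$ in the $I_j$-order with $w:v$ a variable dividing $u:v$, then descend to a minimal $w^* \in \mathcal{G}(I)$ with $w^* \mid w$: either $w^* = w \in \mathcal{G}(I)$ already, in which case $j(w^*) \le j$ and $w^*$ lies before $v$ in our order, or $w^*$ is a proper divisor, whence $\deg w^* < d_j$ forces $j(w^*) < j$. If $i < j$ and $\deg(u:v) = 1$, take $u^* := u$. If $i < j$ and $\deg(u:v) > 1$, hypothesis $(*)$ supplies a witness $w$ either in $I_1 \cup \cdots \cup I_{j-1}$ or in $\mathcal{G}(I_j)$ before $v$; in both scenarios, I descend as above to a minimal $w^* \in \mathcal{G}(I)$ dividing $w$.

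The principal obstacle is the corner case within the sub-case $i < j$ and $w \in I_1 \cup \cdots \cup I_{j-1}$ in which $v \mid w$: then $\deg(w:v) = 1$ forces $w = v \cdot x_a$, and a careless descent could produce $w^* = v$. I plan to handle this by first passing to a minimal generator $w''$ of the ideal $I_k$ (with $w \in I_k$, $k < j$) which divides $w$: since $v \notin I_k$ (because $v \in \mathcal{G}(I)$ and $j(v) = j > k$), $w''$ cannot divide $v$, so the extra factor $x_a$ must appear in $w''$, forcing $w'':v = x_a$; a minimal $w^* \in \mathcal{G}(I)$ with $w^* \mid w''$ then inherits $w^*:v = x_a$, with $j(w^*) \le k < j$ so that $w^*$ precedes $v$ in our order. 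This degree-and-minimality bookkeeping is the principal technical ingredient making the scheme succeed.
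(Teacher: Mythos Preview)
Your proof is correct and follows essentially the same approach as the paper: both order $\mathcal{G}(I)$ by concatenating the linear quotients orders of the $I_j$ (restricted to the surviving minimal generators) and then verify the linear quotients condition by descending from the witness $w$ supplied by $(*)$ or by the linear quotients of $I_j$ to a minimal generator $w^*\in\mathcal{G}(I)$, ruling out $w^*=v$ via minimality. The only difference is cosmetic: the paper packages the argument as an induction on $t$ (setting $J=I_1+\cdots+I_{t-1}$ and appending the new generators from $I_t$), whereas you argue directly with the level function $j(\cdot)$; the resulting orders and descent steps coincide.
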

\begin{proof}
    We prove the statement by induction on $t\ge1$. For $t=1$, there is nothing to prove. Let $t\ge2$. To simplify the notation, we set $J=I_1+\dots+I_{t-1}$, $L=I_t$ and $I=J+L$. By induction, $J$ has linear quotients. If $L\subseteq J$, then $I=J$ and there is nothing to prove. Suppose now that $L\not\subseteq J$. Let $u_1,\dots,u_m$ and $v_1,\dots,v_\ell$ be linear quotients orders of $J$ and $L$, respectively. Since $J$ is generated in degrees $d_1,\dots,d_{t-1}$ and $L$ is generated in degree $d_t$, it follows that $\mathcal{G}(I)=\mathcal{G}(J)\cup\{v_{j_1},\dots,v_{j_k}\}$, for a certain $k\ge1$ and $1\le j_1<\dots<j_k\le\ell$. We claim that $u_1,\dots,u_m,v_{j_1},\dots,v_{j_k}$ is a linear quotients order of $I$. Since $u_1,\dots,u_m$ is already a linear quotients order, it is enough to show that
    $$
    (u_1,\dots,u_m,v_{j_1},\dots,v_{j_{i-1}}):v_{j_i}
    $$
    is generated by variables for all $i$. For later use, let $H=(u_1,\dots,u_m,v_1,\dots,v_{j_{i}-1})$.

    Consider a generator $v_{j_r}:v_{j_i}$. Then, there exists $s<j_i$ such that $v_{s}:v_{j_i}=x_p$ and $x_p$ divides $v_{j_r}:v_{j_i}$. If $s=j_q$ for some $q$, then we are done. Otherwise, $v_{s}$ is not a minimal generator and there exists $u_h$ which divides $v_{s}$. Then $u_h:v_{j_i}$ is not one and it divides $v_{s}:v_{j_i}=x_p$. Hence $u_h:v_{j_i}=x_p$ and we are done.

    Consider now a generator $u_r:v_{j_i}$. If $\deg(u_r:v_{j_i})=1$, we are done. Otherwise, if $\deg(u_r:v_{j_i})>1$, the property $(*)$ implies that there exists $w\in H$ such that $w:v_{j_i}=x_p$, and $x_p$ divides $u_r:v_{j_i}$. Since $H=(u_1,\dots,u_m,v_{j_1},\dots,v_{j_{i-1}})$, there exists $w'\in\{u_1,\dots,u_m,v_{j_1},\dots,v_{j_{i-1}}\}$ such that $w'$ divides $w$. Then $w':v_{j_i}$ divides $w:v_{j_i}=x_p$. Since $w',v_{j_i}\in\mathcal{G}(I)$, it follows that $w':v_{j_i}=x_p$, and this concludes the proof.
\end{proof}
\begin{Lemma}\label{Lem:Somayeh2}
    Let $x\in X$ be a variable, $I_1\subset S=K[X]$ and $I_2\subseteq K[X\setminus\{x\}]$ be monomial ideals with linear quotients such that $I_2\subseteq I_1$. Suppose that $\mathcal{G}(xI_1)\subseteq\mathcal{G}(I)$. Then $I=xI_1+I_2$ has again linear quotients.
\end{Lemma}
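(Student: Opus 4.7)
The plan is to exhibit an explicit ordering of $\mathcal{G}(I)$ that realises linear quotients. First I would pin down $\mathcal{G}(I)$: since every element of $xI_1$ is divisible by $x$ while no element of $\mathcal{G}(I_2)$ is (because $I_2\subseteq K[X\setminus\{x\}]$), no monomial of $xI_1$ can divide a generator of $I_2$. Combining this with the hypothesis $\mathcal{G}(xI_1)\subseteq \mathcal{G}(I)$ gives $\mathcal{G}(I)=\mathcal{G}(xI_1)\sqcup \mathcal{G}(I_2)$. Let $u_1,\dots,u_r$ be a linear-quotients order of $I_1$ and $v_1,\dots,v_s$ one of $I_2$. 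The candidate order on $\mathcal{G}(I)$ is
$$xu_1,\ xu_2,\ \dots,\ xu_r,\ v_1,\ v_2,\ \dots,\ v_s.$$

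For the first block, the identity $xu_j : xu_i = u_j:u_i$ shows that the relevant colons coincide with the ones for $I_1$, so linear quotients transfer directly from $I_1$. The substantive step is to analyse, for each $1\le i\le s$, the colon
$$L_i\ =\ (xu_1,\dots,xu_r,v_1,\dots,v_{i-1}) : v_i.$$
Since $x$ does not appear in $v_i$, each $xu_j:v_i = x\cdot(u_j:v_i)$ is a multiple of $x$, so the part of $L_i$ coming from the $xu_j$'s is contained in the principal ideal $(x)$. Conversely, the hypothesis $I_2\subseteq I_1$ guarantees that some $u_k\in\mathcal{G}(I_1)$ divides $v_i$, and for this $k$ we get $xu_k:v_i = x$. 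Hence this part of the colon is exactly $(x)$. The remaining contribution $(v_1,\dots,v_{i-1}):v_i$ is generated by variables by the chosen linear-quotients order of $I_2$. Therefore $L_i$ is generated by variables, which is precisely the linear-quotients condition.

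I do not foresee a serious obstacle here: the assumption $I_2\subseteq I_1$ is tailored exactly to produce the single variable $x$ from the $xI_1$-block, and the absence of $x$ in $I_2$ automatically forces all mixed-block colons into $(x)$, so the verification is almost mechanical once the right order is chosen. The only care required is the bookkeeping of $\mathcal{G}(I)$ described in the first paragraph.
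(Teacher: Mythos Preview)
Your proof is correct and follows essentially the same approach as the paper: the same ordering $xu_1,\dots,xu_r,v_1,\dots,v_s$ is used, the $xI_1$-block is handled via $xu_j:xu_i=u_j:u_i$, and the mixed colons are reduced to the single variable $x$ by choosing $u_k\mid v_i$ from $I_2\subseteq I_1$. Your write-up is in fact slightly more explicit than the paper's in justifying the disjoint union $\mathcal{G}(I)=\mathcal{G}(xI_1)\sqcup\mathcal{G}(I_2)$ and the identity $xu_j:v_i=x\cdot(u_j:v_i)$.
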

\begin{proof}
    Let $u_1,\dots,u_m$ and $v_1,\dots,v_\ell$ be linear quotients order of $I_1$ and $I_2$, respectively. If $I=xI_1$ there is nothing to prove. Otherwise, $\mathcal{G}(I)=\mathcal{G}(xI_1)\cup\mathcal{G}(I_2)$, since $I_2\subseteq K[X\setminus\{x\}]$. We claim that $xu_1,\dots,xu_m,v_1,\dots,v_\ell$ is a linear quotients order of $I$. To this end, since $xu_1,\dots,xu_m$ is a linear quotients order, it is enough to show that $(xu_1,\dots,xu_m,v_1,\dots,v_{i-1}):v_i$ is generated by variables for all $i$.
    Consider a generator $v_{r}:v_{i}$. Then, there exists $s<i$ such that $v_{s}:v_{i}=x_p$ and $x_p$ divides $v_{r}:v_{i}$ and we are done.  
Now, consider a generator $xu_r:v_{i}$. From $I_2\subseteq K[X\setminus\{x\}]$, we know that  the variable $x$ divides $xu_r:v_{i}$. Since $v_{i}\in I_2\subseteq I_1$, there exists $w\in\mathcal{G}(I_1)$ which divides $v_{i}$. Thus $xw\in\mathcal{G}(I)$, $xw:v_{i}=x$ and we are done.
\end{proof}

Another lemma which is required is the following

\begin{Lemma}\label{Lem:PJ}
    Let $I=I(G)$ be an edge ideal with linear quotients and let $P\subset S$ be a monomial prime ideal. Then $PI$ has linear quotients.
\end{Lemma}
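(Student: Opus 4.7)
The plan is to apply Lemma~\ref{Lem:LinQuotTech} to the decomposition $PI = I_1 + \cdots + I_k$, where $P = (x_{i_1}, \ldots, x_{i_k})$ and $I_\ell = x_{i_\ell} I$ for $\ell \in \{1,\ldots,k\}$. First observe that each $I_\ell$ is generated in degree $3$ and inherits linear quotients from $I$: if $e_1,\ldots,e_m$ is a chosen linear quotients order of $I$, then $x_{i_\ell}e_1,\ldots,x_{i_\ell}e_m$ is a linear quotients order of $I_\ell$, since $(x_{i_\ell}e_{r'}):(x_{i_\ell}e_r) = e_{r'}:e_r$ for all $r'<r$. It remains to verify property $(*)$ of Lemma~\ref{Lem:LinQuotTech}.

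Fix $u = x_{i_i}e_r \in \mathcal{G}(I_i)$ and $v = x_{i_j}e_s \in \mathcal{G}(I_j)$ with $i<j$ and $\deg(u:v) \geq 2$. I would split into cases. If $e_r = e_s$, a direct computation yields $u:v = x_{i_i}$, contradicting $\deg(u:v)\geq 2$. If $e_r \neq e_s$ and $r<s$, linear quotients of $I$ furnish an index $r'<s$ with $e_{r'}:e_s = x_p$, a variable dividing $e_r:e_s$; when $x_p \neq x_{i_j}$ the monomial $w = x_{i_j}e_{r'}$ lies earlier in $I_j$ and satisfies $w:v = x_p \mid u:v$, and when $x_p = x_{i_j}$ one has $e_r = x_{i_j}c$ and $u:v = x_{i_i}c$, so $w = x_{i_i}e_s \in I_i$ works with $w:v = x_{i_i}$ dividing $u:v$.

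Now suppose $e_r \neq e_s$ with $r>s$. The monomial $w = x_{i_i}e_s \in I_i$ succeeds with $w:v = x_{i_i}$ dividing $u:v$, except in the exceptional sub-case $x_{i_i} \in e_s$ and $x_{i_i} \notin e_r$. Writing $e_s = x_{i_i}d$ and $e_r = x_\alpha x_\beta$, the hypothesis $\deg(u:v) \geq 2$ forces $x_{i_j},d \notin \{x_\alpha,x_\beta\}$ and $u:v = x_\alpha x_\beta$. The chordality of $G^c$ intervenes: the four vertices $\{x_\alpha, x_\beta, x_{i_i}, d\}$ carry the non-edges $\{x_\alpha,x_\beta\}$ and $\{x_{i_i},d\}$ of $G^c$ (being edges of $G$), so the potential $4$-cycle $x_\alpha - x_{i_i} - x_\beta - d - x_\alpha$ in $G^c$ cannot be chordless; hence at least one of $\{x_\alpha,x_{i_i}\}, \{x_\beta,x_{i_i}\}, \{x_\alpha,d\}, \{x_\beta,d\}$ belongs to $E(G)$. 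If this extra edge has the form $\{x_\gamma,d\}$ with $\gamma\in\{\alpha,\beta\}$, take $w = x_{i_i}\cdot(x_\gamma d) \in I_i$, so that $w:v = x_\gamma$ divides $u:v$.

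The hardest point is when the only such edge has the form $\{x_\gamma, x_{i_i}\}$: here the natural candidate $w = x_{i_j}\cdot(x_\gamma x_{i_i})$ lies in $I_j$, and one must ensure that $x_\gamma x_{i_i}$ precedes $e_s = x_{i_i}d$ in the linear quotients order of $I$ so that $w$ is genuinely earlier than $v$. This can be arranged by fixing at the outset the lex order on $\mathcal{G}(I)$ induced by a perfect elimination ordering of $G^c$ (available by Dirac's theorem) in which $x_{i_i}$ appears near the beginning of the ordering; then both $x_\gamma x_{i_i}$ and $x_{i_i}d$ share $x_{i_i}$ as lex leader, and the remaining comparison between $x_\gamma$ and $d$ places the desired edge earlier. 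This is the main obstacle of the argument; once overcome, property $(*)$ is established for every pair $(u,v)$, and Lemma~\ref{Lem:LinQuotTech} delivers linear quotients for $PI$.
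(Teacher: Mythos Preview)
There is a genuine gap at what you call the ``hardest point''. The claim that a suitable perfect elimination order makes $x_\gamma x_{i_i}$ precede $e_s=x_{i_i}d$ in the lex order is not justified: the linear quotients order on $I$ must be fixed once at the outset, and even when $x_{i_i}$ happens to be the very first vertex you give no reason why $x_\gamma>d$. In fact the claim is false. Take $V(G)=\{x_1,x_2,x_3,x_4\}$ with $E(G)=\{\{x_1,x_3\},\{x_1,x_4\},\{x_2,x_4\}\}$, so that $G^c$ is the path on $x_1,x_2,x_3,x_4$ and $x_1>x_2>x_3>x_4$ is a perfect elimination order yielding the lex linear quotients order $e_1=x_1x_3,\ e_2=x_1x_4,\ e_3=x_2x_4$ on $I=I(G)$. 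Let $P=(x_1,x_3)$, so $I_1=x_1I$ and $I_2=x_3I$. With $u=x_1e_3=x_1x_2x_4\in\mathcal{G}(I_1)$ and $v=x_3e_1=x_1x_3^2\in\mathcal{G}(I_2)$ one lands precisely in your exceptional sub-case: $r=3>s=1$, $x_{i_i}=x_1\in e_s$, $x_1\notin e_r$, $d=x_3$, $u:v=x_2x_4$, and among the four candidate edges only $\{x_1,x_4\}=\{x_{i_i},x_\beta\}$ lies in $E(G)$, so $x_\gamma=x_4$. But then $x_\gamma x_{i_i}=x_1x_4=e_2$ comes \emph{after} $e_s=e_1$, so your proposed $w=x_{i_j}(x_\gamma x_{i_i})=x_3e_2$ does not precede $v$ in $I_2$. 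No element of $I_1$ works either, since none of $x_1^2x_3,\,x_1^2x_4,\,x_1x_2x_4$ divides $x_2v=x_1x_2x_3^2$ or $x_4v=x_1x_3^2x_4$; and nothing precedes $v$ in $I_2$. Hence property~$(*)$ fails outright for this decomposition, and indeed your concatenated order is not a linear quotients order: one computes $(x_1^2x_3,\,x_1^2x_4,\,x_1x_2x_4):(x_1x_3^2)=(x_1,\,x_2x_4)$.

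The paper sidesteps this entirely by inducting on $m=|\mathcal{G}(I)|$ rather than decomposing along the generators of $P$. Writing $L=(u_1,\dots,u_{m-1})$, it places the generators of $PL$ first (ordered by induction) and then appends the new generators $x_{j_1}u_m,\dots,x_{j_s}u_m$; the colon ideals are then checked directly using only the linear quotients relation for $I$, with no appeal to chordality of $G^c$. In the example above this produces the order $x_1^2x_3,\,x_1x_3^2,\,x_1^2x_4,\,x_1x_3x_4,\,x_1x_2x_4,\,x_2x_3x_4$, which does have linear quotients.
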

\begin{proof}
	Up to a relabeling, we may assume $P=(x_1,\dots,x_t)$. Let $u_1,\dots,u_m$ be a linear quotients order of $I$. We proceed by induction on $m$. If $m=1$, $x_1u_1,\dots,x_tu_1$ is a linear quotients order of $PI$. Let $m>1$ and $L=(u_1,\dots,u_{m-1})$. Then $I=(L,u_m)$, $L$ is again an edge ideal with linear quotients and so by induction $PL$ has a linear quotients order, say, $v_1,\dots,v_h$. Let $x_{j_1}u_{m},\dots,x_{j_s}u_m$, with $1\le j_1<\dots<j_s\le t$, be the monomials in $\mathcal{G}(PI)\setminus\mathcal{G}(PL)$. We claim that 
\begin{equation}\label{standardlq}
 v_1,\dots,v_h,x_{j_1}u_m,\dots,x_{j_s}u_m   
\end{equation}
    is a linear quotients order of $PI$. Since by induction, $v_1,\dots,v_h$ is a linear quotients order of $PL$, it remains to show that $(v_1,\dots,v_h,x_{j_1}u_m,\dots,x_{j_{i-1}}u_m):x_{j_i}u_m$ is generated by variables for all $1\le i\le s$. It is clear that $x_{j_p}u_m:x_{j_i}u_m=x_{j_p}$ is a variable for all $1\le p<i$. Consider now the monomial $v_\ell:x_{j_i}u_m$. Then $v_\ell=x_pu_q$ for some $1\le p\le t$ and some $1\le q<m$. If $\deg(v_\ell:x_{j_i}u_m)=1$, there is nothing to prove. Suppose that $\deg(v_\ell:x_{j_i}u_m)\geq 2$. Let $u_q=x_rx_s$.
    Then at least one of the variables $x_r$ and $x_s$ divides $v_\ell:x_{j_i}u_m$, say  $x_r$. Consider $u_q:u_m$. Since $I$ has linear quotients, there exists $k<m$ such that $u_k:u_m$ is a variable that divides $u_q:u_m$, and so $u_k:u_m$ divides $x_rx_s$. If $u_k:u_m=x_r$, then $x_{j_i}u_k:x_{j_i}u_m=x_r$. Notice that $x_{j_i}u_k\in \mathcal{G}(PL)$. So in this case we are done. Now, assume that $u_k:u_m=x_s$. If $x_s$ divides $v_\ell:x_{j_i}u_m$, then the same argument as before can be applied. Now, suppose that $x_s$ does not divide $v_\ell:x_{j_i}u_m$. Then $v_\ell:x_{j_i}u_m=x_px_r$. Since $u_k:u_m=x_s$, $x_s$ does not divide $u_m$. These imply that $j_i=s$ and hence $u_k$ divides $x_{j_i}u_m$. Therefore, $x_pu_k:x_{j_i}u_m=x_p$ divides 
    $v_\ell:x_{j_i}u_m$ and $x_pu_k\in \mathcal{G}(PL)$.    
\end{proof}

The following remark will be needed in the proof of Theorem \ref{Thm:I2}.
\begin{Remark}\label{rem:PJ}
    \rm  Let $I=I(G)$ be an edge ideal with linear quotients and let $P=(x_{j_1},\dots,x_{j_t})\subset S$ be a monomial prime ideal. Let $u_1,\dots,u_m$ be a linear quotients order of $I$. Then $\mathcal{G}(PI)=\{v_1,\dots,v_h\}\subseteq\{x_{j_p}u_q:1\le p\le t,1\le q\le m\}$. Consider the following order of monomials
\begin{equation}\label{eq:minRep}
    x_{j_1}u_1>\cdots>x_{j_t}u_1> x_{j_1}u_2>\cdots>x_{j_t}u_2>\cdots>x_{j_1}u_m>\cdots>x_{j_t}u_m.
\end{equation}
Notice that each $v_\ell$ is equal to at least one monomial $x_{j_p}u_q$ in the above list. We call 
$x_{j_p}u_q=v_\ell$ the \textit{standard presentation} of $v_\ell$ if $x_{j_p}u_q$ is the biggest monomial equal to $v_\ell$ in the order (\ref{eq:minRep}). Then, the order (\ref{eq:minRep}) induces a total order $>$ on $\mathcal{G}(PI)$ defined for any $v_\ell,v_s\in\mathcal{G}(PI)$ by setting $v_\ell>v_s$ if the standard presentation of $v_\ell$ is bigger than the standard presentation of $v_s$ in the order (\ref{eq:minRep}). It follows from the proof of Lemma \ref{Lem:PJ} that $PI$ has linear quotients with respect to the order $>$.
Indeed, in the ordering (\ref{standardlq}), one may assume by induction that $v_1,\ldots,v_h$ is the desired order. Since $x_{j_1}u_m,\dots,x_{j_s}u_m$ belong to $\mathcal{G}(PI)\setminus\mathcal{G}(PL)$, it follows that they are standard presentations, and so (\ref{standardlq}) is the desired linear quotients order of $PI$.
\end{Remark}

The following result strengthens a result by Minh \textit{et al.} \cite[Theorem 3.3]{MNPTV} in the case that $G$ is a cochordal graph.
\begin{Theorem}\label{Thm:I2}
    Let $G$ be a cochordal graph. Then $I(G)^{(2)}$ has linear quotients. In particular, $\reg\,I(G)^{(2)}=\reg\,I(G)^2=4$.
\end{Theorem}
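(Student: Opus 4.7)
The plan is to start from the decomposition $I(G)^{(2)} = K_3(G) + I(G)^{2}$ provided by equation~(\ref{eq:Minh2}), and show that the sum has linear quotients by applying Lemma~\ref{Lem:LinQuotTech} to the equigenerated pair $I_1 = K_3(G)$ and $I_2 = I(G)^{2}$, of generating degrees $3$ and $4$ respectively. The ideal $I(G)^{2}$ has linear quotients by the classical result of Herzog--Hibi--Zheng~\cite[Theorem 3.2]{HHZ}: cochordality of $G$ forces $I(G)$ to have a linear resolution by Fr\"oberg, so every power $I(G)^{k}$ has linear quotients.

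For $K_3(G)$, I would argue by induction on $|V(G)|$ using Lemma~\ref{Lem:Somayeh2}. Choose a simplicial vertex $x_1$ of $G^c$ and set $H = G[N_G(x_1)]$, $G' = G\setminus\{x_1\}$; both are cochordal as induced subgraphs of $G$. The combinatorial splitting
$$
K_3(G) \;=\; x_1\, I(H) \,+\, K_3(G')
$$
records that triangles of $G$ either contain $x_1$ (and then correspond to edges of $H$) or lie in $G'$. To invoke Lemma~\ref{Lem:Somayeh2} with $x = x_1$, $I_1 = I(H)$ (linear quotients, since $H$ is cochordal) and $I_2 = K_3(G')$ (linear quotients by induction), the non-trivial check is $K_3(G') \subseteq I(H)$. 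This uses the clean observation that $N_{G^c}(x_1)$ is a clique of $G^c$, hence an independent set of $G$, so any triangle of $G'$ has at most one vertex in $N_{G^c}(x_1)$ and therefore at least two vertices in $N_G(x_1)$, which form an edge of $H$ whose monomial divides the triangle monomial. The remaining hypothesis $\mathcal{G}(x_1 I(H)) \subseteq \mathcal{G}(K_3(G))$ is immediate, since $\mathcal{G}(x_1 I(H))$ consists of squarefree degree-$3$ monomials divisible by $x_1$ while $K_3(G')$ is supported away from $x_1$.

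Once both summands have linear quotients, the remaining task, and the main obstacle, is to verify condition $(*)$ of Lemma~\ref{Lem:LinQuotTech}: for each triangle generator $u = x_a x_b x_c \in \mathcal{G}(K_3(G))$ and each $v \in \mathcal{G}(I(G)^{2})$ with $\deg(u:v) > 1$, I must produce a witness $w$, either in $K_3(G)$ or among the generators of $I(G)^{2}$ strictly greater than $v$ in the chosen linear quotients order, such that $w:v$ is a single variable dividing $u:v$. I would fix on $I(G)^{2}$ the linear quotients order inherited from the perfect elimination order of $G^c$ (via the standard presentation of Remark~\ref{rem:PJ} applied iteratively to $I(G)^2 = \sum_e x_a x_b \cdot I(G)$), and argue by case analysis on $|\supp(u)\cap\supp(v)|\in\{0,1\}$. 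Depending on which variables of $u$ are missing from $v$ and on the matching structure of $v$ as a product of two edges, the witness $w$ is constructed either as a triangle in $K_3(G)$ sharing an edge with $v$, or as the result of a single-variable swap inside $v$ that yields a larger generator of $I(G)^{2}$; here the perfect elimination property of $G^c$ is what guarantees that such a swap exists. This combinatorial bookkeeping, making systematic use of Theorem~\ref{Thm:perfect}(a) and the splitting philosophy of the present section, is the most delicate part of the argument.

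Once $(*)$ is established, Lemma~\ref{Lem:LinQuotTech} yields that $I(G)^{(2)}$ has linear quotients, hence is componentwise linear. Then $\reg\,I(G)^{(2)}$ equals the highest degree of a minimal generator of $I(G)^{(2)}$ by \cite[Corollary 8.2.14]{HHBook}, which is $2\cdot 2 = 4$ by Theorem~\ref{Thm:perfect}(b); combined with $\reg\,I(G)^2 = 4$ from \cite[Theorem 3.2]{HHZ}, this gives the ``in particular'' statement.
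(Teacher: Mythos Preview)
Your outline diverges from the paper's proof, and the step you flag as ``the most delicate part'' is left as a plan rather than carried out.

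The paper does \emph{not} apply Lemma~\ref{Lem:LinQuotTech} to the global pair $(K_3(G),\,I(G)^2)$. Instead it runs an induction on $|V(G)|$: peeling off the simplicial vertex $x_1$ of $G^c$, one sets $G_1=G\setminus\{x_1\}$, $G_2=G[N_G(x_1)]$, $P=K_1(G_2)$, and obtains
\[
I(G)^{(2)}\;=\;x_1\bigl[\,I(G_2)+x_1P^{2}+P\,I(G_1)\,\bigr]\;+\;I(G_1)^{(2)}.
\]
Property~$(*)$ is then verified for the triple $\bigl(I(G_2),\,x_1P^2,\,P\,I(G_1)\bigr)$, and the proof is completed by Lemma~\ref{Lem:Somayeh2} and the inductive hypothesis on $I(G_1)^{(2)}$. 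The virtue of this decomposition is that every case in the verification of $(*)$ is anchored at the single simplicial vertex $x_1$, so cochordality enters through one local observation (if $x_p>x_i>x_j$ and $\{x_p,x_i\},\{x_p,x_j\}\in E(G^c)$ then $\{x_i,x_j\}\in E(G^c)$); the whole case analysis fits in under a page.

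Your route asks for $(*)$ to hold for an arbitrary triangle $u\in\mathcal{G}(K_3(G))$ against an arbitrary $v\in\mathcal{G}(I(G)^2)$, with no distinguished vertex to localise the argument. Two concrete issues remain open in your sketch. First, Remark~\ref{rem:PJ} is about $PI$ for a monomial prime $P$, not about $I\cdot I$; ``applying it iteratively'' does not specify a linear quotients order on $I(G)^2$, and $(*)$ is sensitive to that choice, since the witness $w$ may have to lie strictly above $v$ in that order. Second, in the disjoint-support case $\supp(u)\cap\supp(v)=\emptyset$, you assert that a suitable triangle sharing an edge with $v$, or a single-variable swap in $v$ moving upward in the order, always exists---but neither claim is argued, and neither is immediate from the perfect elimination property alone. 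This is precisely the combinatorics the paper's inductive splitting is designed to avoid.

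Your treatment of $K_3(G)$ via Lemma~\ref{Lem:Somayeh2} is correct (and recovers the argument of \cite{Mor}), and the final regularity computation is fine. But as written the proposal has a genuine gap at the verification of~$(*)$, not merely deferred bookkeeping.
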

\begin{proof}
   Let $x_1>\dots>x_n$ be a perfect elimination order of $G^c$. We prove the theorem by induction on $n\ge2$. If $n=2$, then $V(G)=\{x_1,x_2\}$, and $I(G)=(0)$ or $I(G)=(x_1x_2)$ and $I(G)^{(2)}=I(G)^2=(x_1^2x_2^2)$ has linear quotients.

   Suppose now $n>2$. Let $G_1=G\setminus\{x_1\}$ and $G_2=G[N_G(x_1)]$. Then, by the proof of~\cite[Theorem 3.2]{Mor} we have  
   \begin{align}
       \label{eq:I2}I(G)^2\ &=\ (x_1K_1(G_2))^2+x_1K_1(G_2)I(G_1)+I(G_1)^2,\\
       \label{eq:K_3}K_3(G)\ &=\ x_1I(G_2)+K_3(G_1),
   \end{align}
   with $I(G_1)\subseteq K_1(G_2)$, $K_3(G_1)\subseteq I(G_2)$, and these four ideals appearing in the inclusion relations have linear quotients.
   
   We set $P=K_1(G_2)$, and note that $P$ is a monomial prime ideal. Then, by (\ref{eq:Minh2}), (\ref{eq:I2}) and (\ref{eq:K_3}),
   \begin{equation}\label{eq:induction}
        I(G)^{(2)}\ =\ x_1[I(G_2)+x_1P^2+PI(G_1)]+I(G_1)^{(2)}.
    \end{equation}

   Let $\mathcal{G}(P)=V(G_2)=\{x_{j_1},\dots,x_{j_t}\}$. We may assume that $1\le j_1<\dots<j_t\le n$. Then the linear quotient orders of $PI(G_1)$ are determined as in the Remark \ref{rem:PJ}. On the set $\mathcal{G}(x_1P^2)$ we fix the lex order induced by $x_1>x_2>\cdots>x_n$. Obviously, this is a linear quotients order of $x_1P^2$. 
   
   Set $I_1=I(G_2)$, $I_2=x_1P^2$, $I_3=PI(G_1)$, and $L=I_1+I_2+I_3$. Since $I_1,I_2,I_3$ are equigenerated with linear quotients (see Lemma~\ref{Lem:PJ}), by Lemma \ref{Lem:LinQuotTech}, it is enough to show that $L$ satisfies the property $(*)$. For this aim, let $u\in\mathcal{G}(I_h)$ and $v\in\mathcal{G}(I_\ell)$, with $h<\ell$, such that $\deg(u:v)>1$.
   
   Suppose $h=1$. Hence $u=x_ix_j\in I(G_2)$ with $x_i>x_j$ and $x_i,x_j\in P$. We have $\ell=2$ or $\ell=3$. 
   
   Suppose $\ell=2$. Then $v=x_1(x_px_q)\in x_1P^2$ with $x_p\ge x_q$. Since $\deg(u:v)>1$, we have $u:v=u=x_ix_j$ and $p\ne i,j$. Note that $x_p,x_q,x_i,x_j\in\mathcal{G}(P)=V(G_2)$. If $x_i>x_p$, then $w=x_1(x_ix_q)\in I_2$, and $w>v$ in the linear quotients order of $I_2$. Moreover, $w:v=x_i$ divides $u:v$, as wanted. Otherwise, suppose $x_p>x_i$.  We claim that $x_px_i\in I(G_2)$ or $x_px_j\in I(G_2)$. Suppose this is not the case, then $\{x_p,x_i\},\{x_p,x_j\}\in E(G^c)$. Since $x_p$ is a simplicial vertex of $G^c[x_p,x_{p+1},\ldots,x_n]$ and $x_p>x_i>x_j$, it would follow that $\{x_i,x_j\}\in E(G^c)$, which is absurd. Therefore, $x_px_i\in I(G_2)$ or $x_px_j\in I(G_2)$. If, for instance, $w=x_px_i\in I(G_2)$, then $w:v=x_i$ divides $u:v$ and the property $(*)$ is again satisfied. Otherwise, if $w=x_px_j\in I(G_2)$, once again $w:v=x_j$ divides $u:v$ and the property $(*)$ is satisfied.
   
   Suppose $\ell=3$. Then $v=x_p(x_rx_s)\in PI(G_1)$ with $x_p\in P$ and $x_rx_s\in\mathcal{G}(I(G_1))$. We assume that $v=x_p(x_rx_s)$ is the standard presentation of $v$. Since $\deg(u:v)>1$, then $u:v=u$ and so $p\ne i,j$. If $x_i>x_p$, then $w=x_i(x_rx_s)\in\mathcal{G}(PI(G_1))$, and $w>v$ in the linear quotients order of $PI(G_1)$ by Remark \ref{rem:PJ}. Then $w:v=x_i$ divides $u:v$, and the property $(*)$ is verified in such a case. Suppose now $x_p>x_i>x_j$. As shown before, $x_px_i\in I(G_2)$ or $x_px_j\in I(G_2)$. If $w=x_px_i\in I(G_2)$, then $w:v=x_i$ divides $u:v$ and the property $(*)$ is satisfied. We proceed similarly if $x_px_j\in I(G_2)$.

   Suppose now $h=2$. Then $\ell=3$. In this case $x_1$ divides $u:v$, and $v=x_p(x_ix_j)$ with $x_p\in P$ and $x_ix_j\in I(G_1)$. Since $I(G_1)\subseteq P$, we may assume that $x_i\in P$. Then $w=x_1(x_px_i)\in\mathcal{G}(x_1P^2)$, $w:v=x_1$ divides $u:v$, and the property $(*)$ is satisfied.

   Hence, $L$ has linear quotients. Notice that $I(G)^{(2)}=x_1L+I(G_1)^{(2)}$. By induction, $I(G_1)^{(2)}$ has linear quotients. Since $I(G_1)\subseteq P$ and $K_3(G_1)\subseteq I(G_2)$, equation (\ref{eq:Minh2}) implies that $I(G_1)^{(2)}\subseteq L$. We claim that $\mathcal{G}(x_1L)\subseteq\mathcal{G}(I(G)^{(2)})$. Then, Lemma \ref{Lem:Somayeh2} implies that $I(G)^{(2)}$ has linear quotients, as desired.

   Suppose that $\mathcal{G}(x_1L)\setminus\mathcal{G}(I(G)^{(2)})\ne\emptyset$. Then, there exist monomials $u\in\mathcal{G}(x_1L)$ and $v\in\mathcal{G}(I(G_1)^{(2)})$ such that $v$ divides $u$  properly. Since $x_1L,I(G_1)^{(2)}$ are generated in degrees three and four, we have $\deg(u)=4$  and $\deg(v)=3$. Equation (\ref{eq:induction}) implies that $u\in x_1^2P^2+x_1PI(G_1)$. If $u=x_1^2(x_px_q)$ with $x_px_q\in P^2$, since $v\in K[x_2,\dots,x_n]$, then $v$ should divide $x_px_q$, which is not possible. Otherwise, if $u\in x_1PI(G_1)$, then $v$ should divide $u/x_1=x_p(x_ix_j)$, where $x_p\in P$ and $x_ix_j\in I(G_1)$. Since $\deg(v)=3$, from the equation $I(G_1)^{(2)}=K_3(G_1)+I(G_1)^2$ we have $u/x_1=v\in K_3(G_1)$. Thus $u=x_1v\in x_1K_3(G_1)\subseteq x_1I(G_2)$, against the fact that $u\in\mathcal{G}(x_1L)$. Hence $\mathcal{G}(x_1L)\subseteq\mathcal{G}(I(G)^{(2)})$, and this concludes the proof.
\end{proof}

\noindent\textbf{Acknowledgment.}
A. Ficarra was partly supported by INDAM (Istituto Nazionale di Alta Matematica), and also by the Grant JDC2023-051705-I funded by
MICIU/AEI/10.13039/501100011033 and by the FSE+. S. Moradi is supported by the Alexander von Humboldt Foundation.

\end{document}